\numberwithin{equation}{section}  
\newtheorem{punkt}{}[section]
\theoremstyle{plain}
\newtheorem{lemma}[punkt]{Lemma}
\newtheorem{proposition}[punkt]{Proposition}
\newtheorem{theorem}[punkt]{Theorem}
\theoremstyle{definition}
\theoremstyle{plain}
\newtheorem*{corollary*}{Corollary}
\newtheorem*{lemma*}{Lemma}
\newtheorem*{proposition*}{Proposition}
\newtheorem*{theorem*}{Theorem}
\theoremstyle{definition}
\newtheorem*{remark*}{Remark}
\newtheorem*{remarks*}{Remarks}
\newtheorem*{example*}{Example}
\newtheorem*{examples*}{Examples}
\newtheorem*{definition*}{Definition}
\newtheorem*{conjecture*}{Conjecture}
\newtheorem*{assumption*}{Assumption}
\newtheorem*{assumptions*}{Assumptions}
\newtheorem*{construction*}{Construction}
\def\mynat{\mathbb{N}}
\def\mym{\mathcal{M}}
\def\myo{\mathcal{O}}
\def\ee{\mathbb{E}}
\def\re{\qopname\relax{no}{Re}\,}
\def\ai{\qopname\relax{no}{Ai}\nolimits}
\def\eg{e.g.\@\xspace}
\def\ie{i.e.\@\xspace}
\def\iid{i.i.d.\@\xspace}
\def\wrt{w.r.t.\@\xspace}
\begin{document}

\title{Characteristic Polynomials \\[+5pt] of Sample Covariance Matrices}

\author{H. K\"osters}
\address{Holger K\"osters, Fakult\"at f\"ur Mathematik, Universit\"at Bielefeld,
Postfach 100131, 33501 Bielefeld, Germany}
\email{hkoesters@math.uni-bielefeld.de}

% \date{June 14th, 2009}
% \filename{wishart-40.tex}
% \subjclass[2000]{???}
% \keywords{???}

\begin{abstract}
We investigate the second-order correlation function 
of the characteristic polynomial of a sample covariance matrix. 
Starting from an explicit formula for a generating function, 
we re-obtain several well-known kernels from random matrix theory.
\end{abstract}

\maketitle

\markboth{H. K\"osters}{Characteristic Polynomials of Sample Covariance Matrices}

\section{Introduction}

Characteristic polynomials of random matrices have found
considerable attention in recent years, one reason being that
their correlations seem to reflect the correlations of the eigenvalues
\cite{BH1,BH2,FS3,BDS,SF3,AF3,Va,BS,GK,Ko1,Ko2}.

In this article we investigate the second-order correlation function
of the characteristic polynomial of a sample covariance matrix.

\medskip

\textbf{Complex Sample Covariance Matrices.}
Let $Q$ be a distribution on the real line  
with expectation $0$, variance $1/2$ and finite fourth moment $b$,
and for given $n,m \in \mathbb{N}$ with $n \geq m$,
let $X := X(n,m) := (X_{ij})_{i=1,\hdots,n;j=1,\hdots,m}$
denote the $n \times m$ matrix whose entries $X_{ij}$ 
are \iid complex random variables 
whose real and imaginary parts are independent,
each with distribution $Q$.
Let $X^* = X^*(n,m)$ denote the conjugate transpose of $Z$.
Then the Hermitian $m \times m$ matrix $Z := Z(n,m) := (1/n) \, X(n,m)^* X(n,m)$
is called the (complex) \emph{sample covariance matrix} associated with the distribution $Q$. 
For $\mu,\nu \in \mathbb{C}$, let
$$
f(n,m;\mu,\nu) := \ee \left( \det(X(n,m)^*X(n,m)-\mu) \det(X(n,m)^*X(n,m)-\nu) \right)
$$
denote the second-order correlation function
of the characteristic polynomial 
of the ``unrescaled'' sample covariance matrix.

\medskip

\textbf{Real Sample Covariance Matrices.}
Let $Q$ be a distribution on the real line
with ex\-pectation $0$, variance $1$ and finite fourth moment $b$,
and for given $n,m \in \mathbb{N}$ with $n \geq m$,
let $X := X(n,m) := (X_{ij})_{i=1,\hdots,n;j=1,\hdots,m}$
denote the $n \times m$ matrix whose entries $X_{ij}$ 
are \iid real random variables with distribution $Q$.
Let $X^T = X^T(n,m)$ denote the transpose of $X$.
Then the symmetric $m \times m$ matrix $Z := Z(n,m) := (1/n) \, X(n,m)^T X(n,m)$
is called the (real) \emph{sample covariance matrix} associated with the dis\-tribution $Q$. 
For $\mu,\nu \in \mathbb{C}$, let
$$
f(n,m;\mu,\nu) := \ee \left( \det(X(n,m)^TX(n,m)-\mu) \det(X(n,m)^TX(n,m)-\nu) \right)
$$
denote the second-order correlation function
of the characteristic polynomial 
of the ``unrescaled'' sample covariance matrix.

\medskip

In the special case where $Q$ is the Gaussian distribution,
the random matrix $Z$ is also called a (complex or real) \emph{Wishart matrix},
since its distribution is the (complex or real) \emph{Wishart distribution}
(see \textsc{Anderson} \cite{An} or \textsc{Muirhead} \cite{Mu}).

It will always be clear from the context whether 
we are considering the complex or real case. More precisely,
we are interested in the asymptotic behavior of the~values
$f(n_N,m_N;\mu_N,\nu_N)$ as $N \to \infty$,
% for various choices of the sequences $(n_N)$, $(m_N)$, $(\mu_N)$, $(\nu_N)$.
where $n_N := N$, $m_N := N - \alpha$ for some fixed natural number $\alpha$,
and the parameters $\mu_N$ and $\nu_N$ are finally used 
to ``zoom in'' at certain interesting regions of the spectrum 
of the (unrescaled) sample covariance matrix,
% 
% In~all the cases we will consider, 
% the parameters $n_N$ and $m_N$ will tend to infinity as $N \to \infty$
% such that their ratio $\gamma_N := m_N/n_N$ 
% tends to some constant $\gamma \in (0,\infty)$.
% The parameters $\mu_N$, $\nu_N$ will finally be used
% to ``zoom in'' at certain interesting regions
% of the spectrum of the sample covariance matrix $X$,
% % (which, by the way, is a subset of the real line),
%
namely the bulk of the spectrum,
the soft edge of the spectrum,
and the hard edge of the spectrum.
(See Section~3 for details.)

In the complex setting, we will recover,
in all the regions previously mentioned,
the well-known kernels for the ``correlation functions'' of the eigenvalues,
namely the sine kernel, the Airy kernel and the Bessel kernel.
(See Section~3 for details.)
Thus, our results indicate that these kernels are \emph{universal}
in that they arise in connection with quite general sample covariance matrices 
(as~described at the beginning of this section),
albeit at the level of the characteristic polynomial.
It is conjectured that this universality also holds
at the level of the eigenvalue themselves,
but so far this conjecture has only been proven
for a restricted class of sample covariance matrices
in the bulk of the~spectrum
(see \textsc{Ben Arous} and \textsc{P\'ech\'e} \cite{BP})
and under stronger moment conditions than ours
for the edges of the~spectrum 
(see \textsc{Soshnikov} \cite{So} and \textsc{Tao} and \textsc{Vu} \cite{TV}, respectively).

Similar results will also be obtained in the real setting.
However, the results for the correlation functions
of the characteristic polynomial are somewhat more different
from the results for the eigenvalues here.

To obtain our results, we follow the strategy
proposed by \textsc{G\"otze} and \textsc{K\"osters} \cite{GK}
in the context of Wigner matrices.
First, we obtain an explicit expression
for an exponential-type generating function
of the second-order correlation function
of the characteristic polynomial.
Second, we recover the well-known kernels 
from random matrix theory by asymptotic analysis.

\bigskip

\section{Generating Functions}

In this section we derive the generating function
of the second-order correlation function of the characteristic polynomial.
Since the derivation is very similar for complex and real sample covariance matrices,
we present the details only for the complex setting and restrict ourselves
to a few comments for the real setting.

The following well-known representation will prove useful:

\begin{lemma}
\label{chiral}
Let $X \in \mym(n \times m, \mathbb{C})$. Then, for $\lambda \ne 0$,
$$
  \det \left( \begin{array}{cc} \lambda I_n & X \\ X^* & \lambda I_m \end{array} \right)
= \det \left( \lambda^2 I_m - X^* X \right) \lambda^{n-m}
= \det \left( \lambda^2 I_n - X X^* \right) \lambda^{m-n} \,.
$$
\end{lemma}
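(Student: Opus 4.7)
The plan is to apply the Schur-complement block-determinant identity to the $(n+m) \times (n+m)$ matrix on the left-hand side twice: once eliminating the top-left block, once eliminating the bottom-right block. The hypothesis $\lambda \ne 0$ is used exactly to make the respective pivot blocks $\lambda I_n$ and $\lambda I_m$ invertible.

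For the first equality, I would write
$$
\begin{pmatrix} \lambda I_n & X \\ X^* & \lambda I_m \end{pmatrix}
= \begin{pmatrix} I_n & 0 \\ \lambda^{-1} X^* & I_m \end{pmatrix}
\begin{pmatrix} \lambda I_n & X \\ 0 & \lambda I_m - \lambda^{-1} X^* X \end{pmatrix}
$$
and read off, from the triangular right-hand factor, that the determinant equals $\lambda^n \det(\lambda I_m - \lambda^{-1} X^* X) = \lambda^{n-m} \det(\lambda^2 I_m - X^* X)$, which is the middle expression.

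For the second equality, I would instead factor
$$
\begin{pmatrix} \lambda I_n & X \\ X^* & \lambda I_m \end{pmatrix}
= \begin{pmatrix} I_n & \lambda^{-1} X \\ 0 & I_m \end{pmatrix}
\begin{pmatrix} \lambda I_n - \lambda^{-1} X X^* & 0 \\ X^* & \lambda I_m \end{pmatrix},
$$
whose determinant is $\lambda^m \det(\lambda I_n - \lambda^{-1} X X^*) = \lambda^{m-n} \det(\lambda^2 I_n - X X^*)$, the rightmost expression.

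There is no real obstacle: the whole statement is a bookkeeping exercise with Schur complements, and the only role of $\lambda \ne 0$ is to guarantee that the pivot blocks used in the two elementary factorizations are invertible. As a sanity check, the equality of the middle and rightmost expressions is consistent with the classical fact that $X^* X$ and $X X^*$ share the same nonzero eigenvalues with equal multiplicities, which accounts for the compensating factors $\lambda^{n-m}$ and $\lambda^{m-n}$.
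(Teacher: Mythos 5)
Your proof is correct and follows essentially the same route as the paper: both arguments are block (Schur-complement) factorizations of the $(n+m)\times(n+m)$ matrix, once pivoting on the block $\lambda I_n$ and once on $\lambda I_m$, reading off the determinant from the triangular factors. The only cosmetic difference is which side of the factorization carries the Schur complement, which does not change the computation.
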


\begin{proof}
This is an immediate consequence of the matrix factorizations
\begin{align*}
   \left( \begin{array}{cc} A & B \\ C & D \end{array} \right)
&= \left( \begin{array}{cc} I_n & 0 \\ CA^{-1} & D - CA^{-1}B \end{array} \right) \left( \begin{array}{cc} A & B \\ 0 & I_m \end{array} \right) \,, % \\
% &= \left( \begin{array}{cc} A & 0 \\ C & I_m \end{array} \right) \left( \begin{array}{cc} I_n & A^{-1}B \\ 0 & D - CA^{-1}B \end{array} \right) \,,
\end{align*}
\begin{align*}
   \left( \begin{array}{cc} A & B \\ C & D \end{array} \right)
&= \left( \begin{array}{cc} A - BD^{-1}C & BD^{-1} \\ 0 & I_m \end{array} \right) \left( \begin{array}{cc} I_n & 0 \\ C & D \end{array} \right) \,, % \\
% &= \left( \begin{array}{cc} I_n & B \\ 0 & D \end{array} \right) \left( \begin{array}{cc} A - BD^{-1}C & 0 \\ D^{-1}C & I_m \end{array} \right) \,,
\end{align*}
where 
$A \in \mym(n \times n, \mathbb{C})$,
$B \in \mym(n \times m, \mathbb{C})$,
$C \in \mym(m \times n, \mathbb{C})$,
$D \in \mym(m \times m, \mathbb{C})$,
and $D$ and $A$ are invertible.
\end{proof}

\pagebreak[2]

A matrix of the form $\tbinom{\ 0 \ \ X}{X^* \ \, 0 \,}$,
where $X$ is a (complex) random matrix with \iid entries,
is also called a (complex) \emph{chiral matrix}.
Thus, Lemma \ref{chiral} establishes a~connection between
the characteristic polynomial of a sample covariance matrix
and that of a chiral matrix.
Chiral matrices seem more convenient for our purposes,
as they give rise to neater row and column expansions.
 
To illustrate our approach, we start with the first moment 
of the characteristic polynomial, which has already been determined
by \textsc{Forrester} and \textsc{Gamburd} \cite{FG}
by means of a combinatorial argument.
Fix $\lambda \in \mathbb{C}$, and put 
$$
f(n,m) := \ee \det \left( \begin{array}{cc} \lambda I_n & X(n,m) \\ X(n,m)^* & \lambda I_m \end{array} \right) \,,
$$
where $X := X(n,m)$ is a matrix of size $n \times m$ with \iid matrix entries
satisfying our standing moment conditions.
Note that this definition is meaningful even if $n = 0$ or $m = 0$;
we then have $f(0,m) = \lambda^m$ and $f(n,0) = \lambda^n$.
(In particular, the~determinant of the $0 \times 0$ matrix
is defined to be $1$.)

Let us derive a recursive equation for $f(n,m)$.
Suppose that $m > 0$.
Then, doing a~row~and~column~expansion about the last row and the last column,
we~have
\begin{align}
\det \left( \begin{array}{cc} \lambda I_n & X \\ X^* & \lambda I_m \end{array} \right)
&= \lambda \det \left( \begin{array}{cc} \lambda I_{n} & X^{[\,\cdot\,:m]} \\ (X^{[\,\cdot\,:m]})^* & \lambda I_{m-1} \end{array} \right) \nonumber \\
&+ \sum_{i,j=1}^{n} (-1)^{i+j-1} X_{i,m} \overline{X}_{j,m} \det \left( \begin{array}{cc} \lambda I_{n}^{[i:j]} & X^{[i:m]} \\ (X^{[j:m]})^* & \lambda I_{m-1} \end{array} \right) \,,
\label{double-expansion}
\end{align}
where an upper index $[i:j]$ indicates that the $i$th row and the $j$th column
of the corresponding matrix are deleted. 
Taking expectations, using independence and noting that
$$
\ee X_{i,m} \overline{X}_{j,m} = \delta_{ij}
$$
(as follows from our standing moment conditions), we obtain
\begin{align}
\label{fnm-s1}
f(n,m) = \lambda f(n,m-1) - n f(n-1,m-1) \,.
\end{align}
Observe that this is meaningful even if $n = 0$, since the second term
vanishes in this case. Hence, recalling the identity
\begin{align}
\label{fnm-s0}
f(n,0) = \lambda^n \,,
\end{align}
we see that the values $f(n,m)$ are completely determined
by recursion over $m$.

For arbitrary $\alpha \in \mathbb{C}$, the Laguerre polynomials
$L_n^{(\alpha)}(x)$ are defined by
$$
L_n^{(\alpha)}(x) := \sum_{\nu=0}^{n} \binom{n+\alpha}{n-\nu} \frac{(-x)^v}{v!} 
$$
(see \eg Equation (5.1.6) in \textsc{Szeg\"o} \cite{Sz}).
It is well-known that the Laguerre poly\-nomials satisfy the relations
\begin{align}
\label{lna-1}
m L_m^{(n-m)}(x) = - x L_{m-1}^{(n-m+1)}(x) + n L_{m-1}^{(n-m)}(x) \,,
\end{align}
\begin{align}
\label{lna-0}
L_0^{(n-m)}(x) = 1 \,,
\end{align}
(see \eg Equations (5.1.10) and (5.1.14) in \textsc{Szeg\"o} \cite{Sz}),
through which they are completely determined by recursion over $m$.

\pagebreak[2]

Comparing \eqref{fnm-s1}, \eqref{fnm-s0} with \eqref{lna-1}, \eqref{lna-0},
it is easy to see (by induction on $m$) that for any $n,m \geq 0$,
\begin{align}
\label{fnm}
f(n,m) = (-1)^m \, m! \, \lambda^{n-m} \, L^{(n-m)}_{m}(\lambda^2) \,.
\end{align}
% 
% Indeed, this is immediate for $m = 0$, and if \eqref{fnm} holds for $m-1$,
% it follows that
% \begin{align*}
%    f(n,m) 
% &= \lambda f(n,m-1) - n f(n-1,m-1) \\
% &= \lambda \, (-1)^{m-1} \, (m-1)! \, \lambda^{n-m+1} \, L^{(n-m+1)}_{m-1}(\lambda^2) 
%     - n \, (-1)^{m-1} \, (m-1)! \, \lambda^{n-m} \, L^{(n-m)}_{m-1}(\lambda^2) \\
% &= (-1)^m \, (m-1)! \, \lambda^{n-m} \Big[ - \lambda^2 L^{(n-m+1)}_{m-1}(\lambda^2) + n L^{(n-m)}_{m-1}(\lambda^2) \Big] \\
% &= (-1)^m \, m! \, \lambda^{n-m} \, L^{(n-m)}_{m}(\lambda^2) \,,
% \end{align*}
% \ie \eqref{fnm} holds for $m$ as well.
% 
Hence, using Lemma \ref{chiral}, we find that the first moment
of the characteristic poly\-nomial of a (complex) sample covariance matrix satisfies
\begin{align}
\label{firstorder}
  \ee \det \left( \lambda I_m - X^* X \right)
% = \lambda^{(m-n)/2} f(n,m)
= (-1)^m \, m! \, L^{n-m}_{m}(\lambda) \,.
\end{align}
In fact, this was already proved by \textsc{Forrester} and \textsc{Gamburd} \cite{FG}
by means of a~combinatorial argument.

\begin{remark*}
Equation (\ref{firstorder}) remains true for real sample covariance matrices
(under the respective moment conditions).
\end{remark*}

\begin{remark*}
We have obtained a recursive equation over $m$ for the moments $f(n,m)$
Of course, for symmetry reasons, it is clear that it is equally possible 
to derive a recursive equation over $n$. Indeed, starting with 
a row and column expansion about the first row and the first column,
we obtain
$$
f(n,m) = \lambda f(n-1,m) - m f(n-1,m-1) \,.
$$
instead of \eqref{fnm-s1}, together with the initial condition
$$
f(0,m) = \lambda^m \,.
$$
Since the Laguerre polynomials satisfy the relation
$$
L_m^{(n-m)}(x) = L_m^{(n-1-m)}(x) + L_{m-1}^{(n-1-m+1)}(x) \,,
$$
% (see Equation (5.1.13) in \textsc{Szeg\"o} \cite{Sz}),
% together with the initial condition
$$
L_m^{(-m)}(x) = (-x)^m / m! \,,
$$
(see \eg Equations (5.1.13) and (5.2.1) in \textsc{Szeg\"o} \cite{Sz}),
they can also be calculated by~recursion over $n$,
and (\ref{fnm}) follows (by induction on $n$).
\end{remark*}

Let us now turn to the second moment of the characteristic polynomial.
Similarly as above, we first consider 
$$
f(n,m) := \ee \left( \det \left( \begin{array}{cc} \mu I_n & X \\ X^* & \mu I_m \end{array} \right) \det \left( \begin{array}{cc} \nu I_n & X \\ X^* & \nu I_m \end{array} \right) \right) \,.
$$
A similar expansion of the determinants as in \eqref{double-expansion} yields
\begin{align*}
&\mskip24mu
\det \left( \begin{array}{cc} \mu I_n & X \\ X^* & \mu I_m \end{array} \right)
\det \left( \begin{array}{cc} \nu I_n & X \\ X^* & \nu I_m \end{array} \right) \\
&=
\left( \mu \det \left( \begin{array}{cc} \mu I_{n} & X^{[\,\cdot\,:m]} \\ (X^{[\,\cdot\,:m]})^* & \mu I_{m-1} \end{array} \right) + \sum_{i,j=1}^{n} (-1)^{i+j-1} X_{i,m} \overline{X}_{j,m} \det \left( \begin{array}{cc} \mu I_{n}^{[i:j]} & X^{[i:m]} \\ (X^{[j:m]})^* & \mu I_{m-1} \end{array} \right) \right) \\
&\mskip24mu \,\cdot\,
\left( \nu \det \left( \begin{array}{cc} \nu I_{n} & X^{[\,\cdot\,:m]} \\ (X^{[\,\cdot\,:m]})^* & \nu I_{m-1} \end{array} \right) + \sum_{k,l=1}^{n} (-1)^{k+l-1} X_{k,m} \overline{X}_{l,m} \det \left( \begin{array}{cc} \nu I_{n}^{[k:l]} & X^{[k:m]} \\ (X^{[l:m]})^* & \nu I_{m-1} \end{array} \right) \right) \,.
\end{align*}

\def\sdot{\,\cdot\,}

\noindent{}Note that due to our standing moment assumptions, we have
$$
\ee X_{ij} = 0 \,, \qquad
\ee X_{ij}^2 = 0 \,, \qquad
\ee |X_{ij}|^2 = 1 \,, \qquad
\ee |X_{ij}|^4 = 2b^2 + \tfrac12 \,,
$$
Hence, expanding the product and taking expectations, we obtain
\begin{align*}
f(n,m) 
&= 
\mu \nu \ee 
\det \left( \begin{array}{cc} \mu I_{n} & X^{[\sdot:m]} \\ (X^{[\sdot:m]})^* & \mu I_{m-1} \end{array} \right)
\det \left( \begin{array}{cc} \nu I_{n} & X^{[\sdot:m]} \\ (X^{[\sdot:m]})^* & \nu I_{m-1} \end{array} \right)
\\ &\mskip24mu \,-\,
\mu \sum_{k=1}^{n} \ee |X_{k,m}|^2 
\det \left( \begin{array}{cc} \mu I_{n} & X^{[\sdot:m]} \\ (X^{[\sdot:m]})^* & \mu I_{m-1} \end{array} \right)
\det \left( \begin{array}{cc} \nu I_{n-1} & X^{[k:m]} \\ (X^{[k:m]})^* & \nu I_{m-1} \end{array} \right)
\\ &\mskip24mu \,-\,
\nu \sum_{i=1}^{n} \ee |X_{i,m}|^2 
\det \left( \begin{array}{cc} \mu I_{n-1} & X^{[i:m]} \\ (X^{[i:m]})^* & \mu I_{m-1} \end{array} \right)
\det \left( \begin{array}{cc} \nu I_{n} & X^{[\sdot:m]} \\ (X^{[\sdot:m]})^* & \nu I_{m-1} \end{array} \right)
\\ &\mskip24mu \,+\,
\sum_{i} \ee |X_{i,m}|^4 
\det \left( \begin{array}{cc} \mu I_{n-1} & X^{[i:m]} \\ (X^{[i:m]})^* & \mu I_{m-1} \end{array} \right)
\det \left( \begin{array}{cc} \nu I_{n-1} & X^{[i:m]} \\ (X^{[i:m]})^* & \nu I_{m-1} \end{array} \right)
\\ &\mskip24mu \,+\,
\sum_{i \ne k} \ee |X_{i,m}|^2 |X_{k,m}|^2
\det \left( \begin{array}{cc} \mu I_{n-1} & X^{[i:m]} \\ (X^{[i:m]})^* & \mu I_{m-1} \end{array} \right)
\det \left( \begin{array}{cc} \nu I_{n-1} & X^{[k:m]} \\ (X^{[k:m]})^* & \nu I_{m-1} \end{array} \right)
% \\ &\mskip24mu \,+\,
% \sum_{i \ne j} \ee X_{i,m}^2 \overline{X}_{j,m}^2
% \det \left( \begin{array}{cc} \mu I_{n}^{[i:j]} & X^{[i:m]} \\ (X^{[j:m]})^* & \mu I_{m-1} \end{array} \right)
% \det \left( \begin{array}{cc} \nu I_{n}^{[i:j]} & X^{[i:m]} \\ (X^{[j:m]})^* & \nu I_{m-1} \end{array} \right)
\\ &\mskip24mu \,+\,
\sum_{i \ne j} \ee |X_{i,m}|^2 |X_{j,m}|^2
\det \left( \begin{array}{cc} \mu I_{n}^{[i:j]} & X^{[i:m]} \\ (X^{[j:m]})^* & \mu I_{m-1} \end{array} \right)
\det \left( \begin{array}{cc} \nu I_{n}^{[j:i]} & X^{[j:m]} \\ (X^{[i:m]})^* & \nu I_{m-1} \end{array} \right)\,.
\end{align*}
To shorten notation, let us introduce the auxiliary functions
$$
f(n,m,\alpha,\sdot) := \ee \left( \det \left( \begin{array}{cc} \mu I_n^{[\alpha_1:\alpha_2]} & X^{[\alpha_1:\sdot]} \\ (X^{[\alpha_2:\sdot]})^* & \mu I_m \end{array} \right) \det \left( \begin{array}{cc} \nu I_n^{[\alpha_3:\alpha_4]} & X^{[\alpha_3:\sdot]} \\ (X^{[\alpha_4:\sdot]})^* & \nu I_m \end{array} \right) \right) \,,
$$
where $\alpha = (\alpha_1,\alpha_2,\alpha_3,\alpha_4) \in \{ \sdot,1,2 \}^4$
(a dot representing no deletion), \linebreak as well as the abbreviations
$
\sdot := (\sdot,\sdot,\sdot,\sdot),\
01 := (\sdot,\sdot,1,1),\
10 := (1,1,\sdot,\sdot),\linebreak[2]
11_A := (1,1,2,2),\
11_B := (1,2,1,2),\
11_C := (1,2,2,1).
$
(No~other~values of $\alpha$ will be needed. Moreover,
the value $11_B$ will be needed for the real setting only.)
\linebreak[2] Note that $f(n,m,\sdot,\sdot) = f(n,m)$.
We then have
\begin{align}
   f(n,m) 
&= \mu \nu f(n,m-1) \nonumber\\
&- \mu n f(n,m-1,01,\sdot) \nonumber\\
&- \nu n f(n,m-1,10,\sdot) \nonumber\\
&+ (2b+\tfrac12) n f(n-1,m-1) \nonumber\\
&+ n (n-1) f(n,m-1,11_A,\sdot) \nonumber\\
&+ n (n-1) f(n,m-1,11_C,\sdot) \,.
\label{fnm-x}
\end{align}
% Observe that the last but one term has vanished, since $\ee X_{ij}^2 = 0$.

Note that the preceding formula was obtained
by starting with an expansion ``in the $m$-dimension''.
Of course, it is possible to derive an analogous formula
by starting with an expansion ``in the $n$-dimension'';
we then have
\begin{align}
   f(n,m) 
&= \mu \nu f(n-1,m) \nonumber\\
&- \mu m f(n-1,m,\sdot,01) \nonumber\\
&- \nu m f(n-1,m,\sdot,10) \nonumber\\
&+ (2b+\tfrac12) m f(n-1,m-1) \nonumber\\
&+ m (m-1) f(n-1,m,\sdot,11_A) \nonumber\\
&+ m (m-1) f(n-1,m,\sdot,11_C) \,,
\label{fnm-y}
\end{align}
where $f(n,m,\sdot,\alpha)$ is defined by
$$
f(n,m,\sdot,\alpha) := \ee \left( \det \left( \begin{array}{cc} \mu I_n & X^{[\sdot:\alpha_1]} \\ (X^{[\sdot:\alpha_2]})^* & \mu I_m^{[\alpha_2:\alpha_1]} \end{array} \right) \det \left( \begin{array}{cc} \nu I_n & X^{[\sdot:\alpha_3]} \\ (X^{[\sdot:\alpha_4]})^* & \nu I_m^{[\alpha_4:\alpha_3]} \end{array} \right) \right) \,,
$$
and the possible values $\sdot,01,10,11_A,11_B,11_C$ for $\alpha$ 
are the same as before.

Here is a list of recursive relations which can be derived 
using the above arguments:
\begin{align}
   f(n,m,\sdot,\sdot)
&= \mu \nu f(n,m-1,\sdot,\sdot) \nonumber\\
&- \mu n f(n,m-1,01,\sdot) \nonumber\\
&- \nu n f(n,m-1,10,\sdot) \nonumber\\
&+ (2b+\tfrac12) n f(n-1,m-1,\sdot,\sdot) \nonumber\\
&+ n(n-1) f(n,m-1,11^A,\sdot) \nonumber\\
&+ n(n-1) f(n,m-1,11^C,\sdot) & (n \geq 0, m \geq 1) \,,
\label{fnm-a}
\\[+3pt]
f(n,m,\sdot,01) &= (+\mu) f(n,m-1,\sdot,\sdot) \nonumber\\ &- n f(n,m-1,10,\sdot) & (n \geq 0, m \geq 1) \,, 
\label{fnm-b}
\\[+3pt]
f(n,m,\sdot,10) &= (+\nu) f(n,m-1,\sdot,\sdot) \nonumber\\ &- n f(n,m-1,01,\sdot) & (n \geq 0, m \geq 1) \,, 
\label{fnm-c}
\allowdisplaybreaks
\\[+3pt]
   f(n,m,\sdot,11^A)
&= \mu \nu f(n,m-2,\sdot,\sdot) \nonumber\\
&- \mu n f(n,m-2,01,\sdot) \nonumber\\
&- \nu n f(n,m-2,10,\sdot) \nonumber\\
&+ n f(n-1,m-2,\sdot,\sdot) \nonumber\\
&+ n(n-1) f(n,m-2,11^A,\sdot) & (n \geq 0, m \geq 2) \,,
\label{fnm-d}
\\[+3pt]
   f(n,m,\sdot,11^C)
&= n f(n-1,m-2,\sdot,\sdot) \nonumber\\
&+ n(n-1) f(n,m-2,11^C,\sdot) & (n \geq 0, m \geq 2) \,,
\label{fnm-e}
\allowdisplaybreaks
\\[+3pt]
   f(n,m,\sdot,\sdot)
&= \mu \nu f(n-1,m,\sdot,\sdot) \nonumber\\
&- \mu m f(n-1,m,\sdot,01) \nonumber\\
&- \nu m f(n-1,m,\sdot,10) \nonumber\\
&+ (2b+\tfrac12) m f(n-1,m-1,\sdot,\sdot) \nonumber\\
&+ m(m-1) f(n-1,m,\sdot,11^A) \nonumber\\
&+ m(m-1) f(n-1,m,\sdot,11^C) & (n \geq 1, m \geq 0) \,, 
\label{fnm-f}
\\[+3pt]
f(n,m,01,\sdot) &= (+\mu) f(n-1,m,\sdot,\sdot) \nonumber\\ &- m f(n-1,m,\sdot,10) & (n \geq 1, m \geq 0) \,, 
\label{fnm-g}
\\[+3pt]
f(n,m,10,\sdot) &= (+\nu) f(n-1,m,\sdot,\sdot) \nonumber\\ &- m f(n-1,m,\sdot,01) & (n \geq 1, m \geq 0) \,,
\label{fnm-h}
\\[+3pt]
   f(n,m,11^A,\sdot)
&= \mu \nu f(n-2,m,\sdot,\sdot) \nonumber\\
&- \mu m f(n-2,m,\sdot,01) \nonumber\\
&- \nu m f(n-2,m,\sdot,10) \nonumber\\
&+ m f(n-2,m-1,\sdot,\sdot) \nonumber\\
&+ m(m-1) f(n-2,m,\sdot,11^A) & (n \geq 2, m \geq 0) \,,
\label{fnm-i}
\\[+3pt]
   f(n,m,11^C,\sdot)
&= m f(n-2,m-1,\sdot,\sdot) \nonumber\\
&+ m(m-1) f(n-2,m,\sdot,11^C) & (n \geq 2, m \geq 0) \,.
\label{fnm-j}
\end{align}
(When evaluating these recursive relations,
there may arise some undefined terms with negative arguments,
but this poses no problem as these terms are always accompanied 
by the factor zero. Similar remarks apply to the formulas below.)
Together with the initial conditions
\begin{align}
\label{fnm-o}
f(n,0) = (\mu \nu)^{n} \,, \qquad f(0,m) = (\mu \nu)^{m} \,,
\end{align}
the equations \eqref{fnm-a} -- \eqref{fnm-j} determine the values $f(n,m)$ completely.

\pagebreak[2]

We will now derive a recursive equation involving the values $f(n,m)$ only.
\linebreak By \eqref{fnm-i} and \eqref{fnm-j} with $m$ replaced by $m-1$, 
we have, for $n \ge 2$, $m \geq 1$,
\begin{align*}
& \mskip24mu f(n,m-1,11^A,\sdot) + f(n,m-1,11^C,\sdot) \\
&= \mu \nu f(n-2,m-1)
 - \mu (m-1) f(n-2,m-1,\sdot,01)
 - \nu (m-1) f(n-2,m-1,\sdot,10) \\
&+ 2(m-1) f(n-2,m-2)
 + (m-1)(m-2) \Big( f(n-2,m-1,\sdot,11^A) + f(n-2,m-1,\sdot,11^C) \Big) \,.
\end{align*}
By (\ref{fnm-f}) with $n,m$ replaced by $n-1,m-1$,
the two summands involving $11^A,11^C$ satisfy, 
for $n \geq 2$, $m \geq 1$,
\begin{align*}
& \mskip24mu (m-1)(m-2) \Big( f(n-2,m-1,\sdot,11^A) + f(n-2,m-1,\sdot,11^C) \Big) \\
&= - \mu \nu f(n-2,m-1)
 + \mu (m-1) f(n-2,m-1,\sdot,01)
 + \nu (m-1) f(n-2,m-1,\sdot,10) \\
&+ f(n-1,m-1)
 - (2b+\tfrac12) (m-1) f(n-2,m-2) \,,
\end{align*}
whence, for $n \geq 2$, $m \geq 1$,
\begin{align}
& \mskip24mu f(n,m-1,11^A,\sdot) + f(n,m-1,11^C,\sdot) \nonumber\\
&= 2(m-1) f(n-2,m-2)
 + f(n-1,m-1)
 - (2b+\tfrac12) (m-1) f(n-2,m-2) \,.
\label{fnm-tmpa}
\end{align}
Plugging this into \eqref{fnm-a} and rearranging terms, 
we find that, for $m \geq 1$,
\begin{multline}
   f(n,m)
 = \mu \nu f(n,m-1)
 - \mu n f(n,m-1,01,\sdot)
 - \nu n f(n,m-1,10,\sdot) \\
 + \big( n(n+1) + n (2b-\tfrac32) \big) f(n-1,m-1) 
 - \big( n(n-1)(m-1) (2b-\tfrac{3}{2}) \big) f(n-2,m-2) \,.
\label{fnm-tmpb}
\end{multline}
(For $n = 0$ and $n = 1$, \eqref{fnm-tmpb} follows directly from \eqref{fnm-a}.)
Thus, we~have eliminated the terms involving $11^A$ and $11^C$.

The terms involving $01$ and $10$ can be eliminated by a similar substitution.
\linebreak[2] Using \eqref{fnm-g} and \eqref{fnm-h} with $m$ replaced by $m-1$, 
\eqref{fnm-b} and \eqref{fnm-c} with $n,m$ replaced by $n-1,m-1$
and finally \eqref{fnm-tmpb} with $n,m$ replaced by $n-1,m-1$, 
we~have, for $n \geq 1$, $m \geq 1$,
\begin{align*}
&\mskip24mu - \Big( \mu f(n,m-1,01,\sdot) + \nu f(n,m-1,10,\sdot) \Big) \\
&= - (\mu^2 + \nu^2) f(n-1,m-1) + (m-1) \Big( \mu f(n-1,m-1,\sdot,10) + \nu f(n-1,m-1,\sdot,01) \Big) \\
&= - (\mu^2 + \nu^2) f(n-1,m-1) + 2 \mu \nu (m-1) f(n-1,m-2) \\ & \qquad \,-\, (n-1)(m-1) \Big( \mu f(n-1,m-2,01,\sdot) + \nu f(n-1,m-2,10,\sdot) \Big) \\
&= - (\mu^2 + \nu^2) f(n-1,m-1) + \mu \nu (m-1) f(n-1,m-2) \\
& \qquad \,+\, (m-1) f(n-1,m-1) \\ 
& \qquad \,-\, \big( n(n-1)(m-1) \,+\, (n-1)(m-1) (2b-\tfrac32) \big) f(n-2,m-2) \\
& \qquad \,+\, \big( (n-1)(n-2)(m-1)(m-2) (2b-\tfrac{3}{2}) \big) f(n-3,m-3) \,.
\end{align*}
Plugging this into \eqref{fnm-tmpb} and rearranging terms, 
it follows that, for $m \geq 1$,
% \begin{align*}
%    f(n,m)
% &= n(m-1) f(n-1,m-1)
%  + T_1(n,m) f(n-1,m-1)
%  - T_2(n,m) f(n-2,m-2) \\
% &- n(m-1) T_1(n-1,m-1) f(n-2,m-2) 
%  + n(m-1) T_2(n-1,m-1) f(n-3,m-3) \\
% &+ \mu \nu f(n,m-1) 
%  + \mu \nu n (m-1) f(n-1,m-2) 
%  - (\mu^2 + \nu^2) n f(n-1,m-1)\,.
% \end{align*}
% Replacing $T_1$ and $T_2$ by their definitions, this can be rewritten as
\begin{align}
   f&(n,m)
 = n(n+m) f(n-1,m-1)
 - n^2(n-1)(m-1) f(n-2,m-2) \nonumber\\ 
&+ (2b-\tfrac32) n \Big( f(n-1,m-1) - 2(n-1)(m-1) f(n-2,m-2) \nonumber\\&\mskip224mu + (n-1)(n-2)(m-1)(m-2) f(n-3,m-3) \Big) \nonumber\\
&+ \mu \nu f(n,m-1) 
 + \mu \nu n (m-1) f(n-1,m-2)
 - (\mu^2 + \nu^2) n f(n-1,m-1) \,.
\label{fnm-1}
\end{align}
% where
% \begin{multline*}
% S(n,m) := (2b-\tfrac32)
% \Big( 
%  f(n-1,m-1) - 2(n-1)(m-1) f(n-2,m-2) \\ + (n-1)(n-2)(m-1)(m-2) f(n-3,m-3)
% \Big) \,.
% \end{multline*}
(For $n = 0$, \eqref{fnm-1} follows immediately from \eqref{fnm-tmpb}.)
By symmetry, we~also~have, for $n \geq 1$,
\begin{align}
   f&(n,m)
 = (n+m)m f(n-1,m-1)
 - (n-1)m^2(m-1) f(n-2,m-2) \nonumber\\ 
&+ (2b-\tfrac32) m \Big( f(n-1,m-1) - 2(n-1)(m-1) f(n-2,m-2) \nonumber\\&\mskip224mu + (n-1)(n-2)(m-1)(m-2) f(n-3,m-3) \Big) \nonumber\\
&+ \mu \nu f(n-1,m) 
 + \mu \nu (n-1) m f(n-2,m-1)
 - (\mu^2 + \nu^2) m f(n-1,m-1) \,.
\label{fnm-2}
\end{align}
Hence, the values $f(n,m)$ may be computed recursively starting from \eqref{fnm-o}
and using either \eqref{fnm-1} or \eqref{fnm-2}.

From now on, for $n \geq 0, m \geq 0$, let $f(n,m)$ denote 
the second-order correlation function of the characteristic polynomial 
of the (complex) sample covariance matrix as defined in the Introduction.
(For the rest of this section, we will usually omit the parameters $\mu,\nu$,
which are regarded as fixed.)
Then, by Lemma \ref{chiral}, we have to make the replacements
$f(n,m) \mapsto f(n,m) \cdot (\mu \nu)^{n-m}$, $\mu^2 \mapsto \mu$, $\nu^2 \mapsto \nu$ 
in the~pre\-ceding two equations, thereby obtaining
\begin{align}
   f&(n,m)
 = n(n+m) f(n-1,m-1)
 - n^2(n-1)(m-1) f(n-2,m-2) \nonumber\\
&+ (2b-\tfrac32) n \Big( f(n-1,m-1) - 2(n-1)(m-1) f(n-2,m-2) \nonumber\\&\mskip224mu + (n-1)(n-2)(m-1)(m-2) f(n-3,m-3) \Big) \nonumber\\
&+ \mu \nu f(n,m-1) 
 + \mu \nu n (m-1) f(n-1,m-2) 
 - (\mu + \nu) n f(n-1,m-1) \qquad
\label{fnm-3}
\end{align}
for $m \geq 1$ and
\begin{align}
   f&(n,m)
 = (n+m)m f(n-1,m-1)
 - (n-1)m^2(m-1) f(n-2,m-2) \nonumber\\
&+ (2b-\tfrac32) m \Big( f(n-1,m-1) - 2(n-1)(m-1) f(n-2,m-2) \nonumber\\&\mskip224mu + (n-1)(n-2)(m-1)(m-2) f(n-3,m-3) \Big) \nonumber\\
&+ f(n-1,m) 
 + (n-1) m f(n-2,m-1) 
 - (\mu + \nu) m f(n-1,m-1) \qquad
\label{fnm-4}
\end{align}
for $n \geq 1$, respectively, 
along with the initial conditions
\begin{align}
f(n,0) = 1 \qquad \text{and} \qquad f(0,m) = (\mu \nu)^m \,.
\label{fnm-9}
\end{align}
(Observe that in contrast to the equations for the chiral matrix, the equations 
for the sample covariance matrix are not fully symmetrical in $n$ and $m$.)

Let $\alpha := n - m$ denote the difference of $n$ and $m$
and assume that $\alpha \geq 0$.
(The~case $n \leq m$ could be reduced to this case by exchanging the roles 
of $n$~and~$m$ and by multiplying with the appropriate power of $\mu\nu$.)
We~will determine, for any $\alpha \in \mathbb{N}$,
the generating function
\begin{align}
\label{falphadefinition}
\sum_{m=0}^{\infty} \frac{f(m+\alpha,m)}{(m+\alpha)! \, m!} \, z^m \,,
\end{align}
where $|z| < 1$.
% It is not difficult to see that the series is absolutely and uniformly convergent
% on the set $|x| \leq 1 - \varepsilon$, for any $\varepsilon > 0$.
% Thus, $F_\alpha(x)$ defines an analytic function on the~unit disc
% in the complex plane.

Put
$$
E(z) := \exp \left( -(\mu+\nu) \frac{z}{1-z} + (2b-\tfrac32)z \right) \,.
$$
We will show that for any $\alpha \in \mathbb{N}$,
the generating function $\eqref{falphadefinition}$ is given by
\begin{align}
\label{smartguess}
F_\alpha(z) := E(z) \, (1-z)^{-2-\alpha} \, \sum_{k=0}^{\infty} \frac{(\mu \nu z)^k}{(k+\alpha)! \, k!} (1-z)^{-2k} \,.
\end{align}
% 
% \begin{remark*}
% We found this solution starting from the Gaussian case,
% where every\-thing can be calculated explicitly using standard techniques 
% from random matrix theory (see \eg Chapter 4.1 in \textsc{Forrester}).
% \end{remark*}
% 
To begin with, observe that $F_\alpha(z)$ defines an analytic function 
on the unit disc in the complex plane, with power series representation
\begin{align}
\label{powerseries}
F_\alpha(z) = \sum_{m=0}^{\infty} c_\alpha(m) \, z^m \,,
\end{align}
say. Next, differentiating (\ref{smartguess}) \wrt~$z$, we~obtain
\begin{multline*}
F_\alpha'(z) = \left( - \frac{\mu+\nu}{(1-z)^2} + (2b-\tfrac32) \right) F_\alpha(z)
+ \frac{2+\alpha}{1-z} F_\alpha(z) \\
+ E(z) (1-z)^{-2-\alpha} \sum_{k=1}^{\infty} \frac{(\mu \nu z)^k}{(k+\alpha)! \, (k-1)!} (1-z)^{-2k} \left( \frac{1}{z} + \frac{2}{1-z} \right) \,.
\end{multline*}
Since
$$
\frac{1}{z} + \frac{2}{1-z} = \frac{1-z+2z}{z(1-z)} = \frac{1+z}{z(1-z)} \,,
$$
% the last line in the preceding formula is equal to 
% $$
% \frac{1+z}{z (1-z)^2} \cdot E(z) (1-z)^{-2-\alpha-1} \sum_{k=0}^{\infty} \frac{(\mu \nu z)^{k+1}}{(k+\alpha+1)! \, k!} (1-z)^{-2k} \,,
% $$
it follows that
\begin{align}
F_\alpha'(z) &= \frac{2+\alpha}{1-z} F_\alpha(z) + (2b-\tfrac32) F_\alpha(z) - \frac{\mu+\nu}{(1-z)^2} F_\alpha(z) + \mu \nu \frac{1+z}{(1-z)^2} F_{\alpha+1}(z) \,,
\label{ode-1}
\end{align}
or (equivalently)
\begin{align}
F_\alpha'(z) &= (2z F_\alpha'(z) + (2+\alpha) F_\alpha(z)) - (z^2 F_\alpha'(z) + (2+\alpha) z F_\alpha(z)) 
\nonumber\\ & \quad\quad\quad \,+\, (2b-\tfrac32) (1-z)^2 F_\alpha(z) - (\mu+\nu) F_\alpha(z) + \mu \nu (1+z) F_{\alpha+1}(z) \,.
\label{ode-2}
\end{align}
In terms of the power series coefficients $c_\alpha(m)$ 
defined by \eqref{powerseries}, this translates into
the recursive relation, for $m \geq 1$,
\begin{align}
   m c_\alpha(m)
&= (2m+\alpha) c_\alpha(m-1)
 - (m+\alpha) c_\alpha(m-2) \nonumber \\
&+ (2b-\tfrac32) 
   \Big( c_\alpha(m-1) - 2c_\alpha(m-2) + c_\alpha(m-3) \Big) \nonumber \\
&+ \mu \nu c_{\alpha+1}(m-1) 
 + \mu \nu c_{\alpha+1}(m-2) 
 - (\mu + \nu) c_\alpha(m-1) \,,
\label{c-recursion-1}
\end{align}
where terms with a negative argument are to be regarded as zero.
For $m = 0$, we~clearly~have
\begin{align}
c_\alpha(0) = 1/\alpha! \,.
\label{c-recursion-0}
\end{align}
Comparing \eqref{fnm-3}, \eqref{fnm-9} and \eqref{c-recursion-1}, \eqref{c-recursion-0},
it is easy to see that the coefficients $c_\alpha(m)$ satisfy the same recursive equations
as the values
$
f(m+\alpha,m) / ((m+\alpha)! \, m!) \,.
$
Thus, as all values are uniquely determined by these recursive equations, it follows
that the generating function \eqref{falphadefinition} is given by \eqref{smartguess},
for any $\alpha \in \mathbb{N}$. 

\pagebreak[2]

Let us summarize our result for the complex setting as follows:

\begin{proposition}
\label{complex-SCM}
For any $\alpha \in \mathbb{N}$,
the second-order correlation function $f(n,m)$ of the characteristic polynomial 
of an unrescaled complex sample covariance matrix
satisfies
\begin{multline}
\sum_{m=0}^{\infty} \frac{f(m+\alpha,m)}{(m+\alpha)! \, m!} z^m = F_\alpha(z) \\
:= \exp \left( -(\mu+\nu) \frac{z}{1-z} + b^*z \right) \sum_{k=0}^{\infty} \frac{(\mu \nu z)^k}{(k+\alpha)! \, k!} (1-z)^{-2k-\alpha-2} \,,
\label{complex-SCMF}
\end{multline}
where $b^* := 2(b - \tfrac{3}{4})$.
\end{proposition}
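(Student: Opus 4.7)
The plan is to identify a closed-form candidate $F_\alpha(z)$ for the generating function and verify it by matching recursions. The key point is that the bivariate sequence $f(m+\alpha,m)/((m+\alpha)!\,m!)$ is uniquely determined by a one-step recursion together with one initial value, so it suffices to show that the power series coefficients of the candidate obey the same recursion with the same initial data.

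First, I would translate the chiral recursion into a recursion for the sample covariance matrix. Starting from the row/column expansion \eqref{double-expansion} applied to the product of two determinants, I would take expectations using the moment assumptions ($\ee X_{ij}=0$, $\ee X_{ij}^2=0$, $\ee|X_{ij}|^2=1$, $\ee|X_{ij}|^4=2b^2+\tfrac12$) and collect the resulting terms according to the deletion patterns $\sdot,01,10,11_A,11_C$. This yields the family \eqref{fnm-a}--\eqref{fnm-j}. The combinatorial heart of the argument, and the step I expect to be most tedious, is the successive elimination of the auxiliary functions: using \eqref{fnm-i}, \eqref{fnm-j} (with $m\mapsto m-1$) together with \eqref{fnm-f} to eliminate the $11_A,11_C$ terms (yielding \eqref{fnm-tmpa}, \eqref{fnm-tmpb}), and then using \eqref{fnm-g}--\eqref{fnm-h}, \eqref{fnm-b}--\eqref{fnm-c}, and \eqref{fnm-tmpb} to eliminate the $01,10$ terms. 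After this elimination and the substitution $f(n,m)\mapsto f(n,m)(\mu\nu)^{n-m}$, $\mu^2\mapsto\mu$, $\nu^2\mapsto\nu$ dictated by Lemma \ref{chiral}, one arrives at the closed recursion \eqref{fnm-3} with initial data \eqref{fnm-9}.

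Next, I would justify the candidate $F_\alpha(z)$ defined in \eqref{smartguess}. The factor $(1-z)^{-2-\alpha-2k}$ ensures the inner sum converges uniformly on compacta of $\{|z|<1\}$, so $F_\alpha$ is analytic there and has a genuine power series expansion $F_\alpha(z)=\sum_{m\ge0} c_\alpha(m)z^m$. From the initial term $k=0$ one reads off $c_\alpha(0)=1/\alpha!$, matching $f(\alpha,0)/(\alpha!\,0!)=1/\alpha!$ by \eqref{fnm-9}.

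The remaining task is to show that the $c_\alpha(m)$ satisfy the same recursion \eqref{fnm-3}. For this I would differentiate $F_\alpha(z)$ with respect to $z$; the derivatives of the prefactor $E(z)(1-z)^{-2-\alpha}$ contribute the terms with coefficients $(2b-\tfrac32)$, $(2+\alpha)/(1-z)$, and $-(\mu+\nu)/(1-z)^2$, while differentiating $z^k(1-z)^{-2k}$ inside the inner sum produces the factor $(1+z)/(z(1-z))$ and reindexes the inner sum into $\mu\nu\,F_{\alpha+1}(z)$. This yields the identity \eqref{ode-1}, which after multiplying through by $(1-z)^2$ gives the polynomial form \eqref{ode-2}. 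Extracting the coefficient of $z^{m-1}$ on both sides yields precisely the recursion \eqref{c-recursion-1}. Comparing \eqref{c-recursion-1}, \eqref{c-recursion-0} with \eqref{fnm-3}, \eqref{fnm-9}, one sees that $c_\alpha(m)$ and $f(m+\alpha,m)/((m+\alpha)!\,m!)$ obey the same two-parameter recursion (the $\alpha$ shifts match because $c_{\alpha+1}(m-1)$ corresponds to $f(m+\alpha,m-1)/((m+\alpha)!(m-1)!)$ after the normalisation). Induction on $m$, with $\alpha$ ranging over $\mathbb{N}$ simultaneously, therefore forces $c_\alpha(m)=f(m+\alpha,m)/((m+\alpha)!\,m!)$, which is the claimed identity \eqref{complex-SCMF} with $b^*=2b-\tfrac32=2(b-\tfrac34)$.

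The main obstacle is the algebraic elimination in Step 1: one must carefully track the six auxiliary functions and the signs and combinatorial factors arising from double row/column expansions to reach the clean three-term recursion \eqref{fnm-3}. Once that recursion is in hand, verifying the ODE \eqref{ode-1} for the ansatz is a direct differentiation and the final coefficient comparison is routine.
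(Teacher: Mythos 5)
Your proposal follows the paper's own proof essentially verbatim: derive the closed recursion \eqref{fnm-3} with initial data \eqref{fnm-9} by double row/column expansion and successive elimination of the auxiliary deletion-pattern functions, then verify the ansatz \eqref{smartguess} by differentiating to obtain \eqref{ode-1}--\eqref{ode-2}, extracting coefficients to get \eqref{c-recursion-1}--\eqref{c-recursion-0}, and matching the two recursions by simultaneous induction over $m$ and all $\alpha$. The only blemish is the inherited typo $\ee|X_{ij}|^4 = 2b^2+\tfrac12$ (it should be $2b+\tfrac12$, consistent with the coefficient used in \eqref{fnm-x}), which does not affect the argument.
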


\pagebreak[2]

Also, let us state the analogous result for the real setting:

\begin{proposition}
\label{real-SCM}
For any $\alpha \in \mathbb{N}$,
The second-order correlation function $f(n,m)$ of the characteristic polynomial 
of an unrescaled real sample covariance matrix
satisfies
\begin{multline}
\sum_{m=0}^{\infty} \frac{f(m+\alpha,m)}{(m+\alpha)! \, m!} z^m = F_\alpha(z) \\
:= \exp \left( -(\mu+\nu) \frac{z}{1-z} + b^*z \right) \sum_{k=0}^{\infty} \frac{(\mu \nu z)^k}{(k+\alpha)! \, k!} (1-z)^{-2k-\alpha-3} \,,
\label{real-SCMF}
\end{multline}
where $b^* := b - 3$.
\end{proposition}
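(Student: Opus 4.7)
The plan is to mirror the derivation carried out for the complex setting in Section~2, but with the moment data adapted to real entries. In the real case, the standing assumptions give $\ee X_{ij}=0$, $\ee X_{ij}^2 = 1$, $\ee X_{ij}^3 = 0$, $\ee X_{ij}^4 = b$, so in the double row-and-column expansion the pairing $\ee X_{i,m}X_{j,m}=\delta_{ij}$ now contributes (where its complex analogue $\ee X_{i,m}^2 = 0$ killed it). This is precisely why the auxiliary value $11_B = (1,2,1,2)$, reserved in the paper for the real setting, finally enters: starting from the chiral form $\binom{\lambda I_n\ X}{X^T\ \lambda I_m}$ and doing the same double expansion about the last row/column as in \eqref{double-expansion}, the expectation of the cross-product of the two determinants acquires two new kinds of terms, namely $n(n-1)\,f(n,m-1,11_B,\sdot)$ and a self-pairing giving an extra $n\,f(n-1,m-1)$ contribution from $\ee X_{i,m}^2 \ee X_{k,m}^2 = 1$. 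This replaces \eqref{fnm-a} by an equation with these extra terms, and produces a new auxiliary recursion for $f(n,m-1,\sdot,11_B)$ analogous to \eqref{fnm-d},~\eqref{fnm-e}.

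Next, I would carry out the elimination exactly as in the passage leading to \eqref{fnm-1}, \eqref{fnm-2}: express $f(n,m-1,11_A,\sdot) + f(n,m-1,11_B,\sdot) + f(n,m-1,11_C,\sdot)$ via the analogues of \eqref{fnm-i}, \eqref{fnm-j} and the new $11_B$ relation, then use \eqref{fnm-f} (in its real form) to trade the surviving $11$-terms for lower-order $f(n,m)$'s, obtaining an intermediate formula analogous to \eqref{fnm-tmpb} but with $b^\star := b - 3$ in place of $2b - \tfrac{3}{2}$, and with an extra $f(n-1,m-1)$-type term coming from the new cross-pairing. The same elimination of the $01$- and $10$-terms via \eqref{fnm-g}--\eqref{fnm-h} and their real versions, iterated once, gives the real analogue of \eqref{fnm-1}. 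Translating to the sample covariance matrix through Lemma~\ref{chiral} (i.e.\ replacing $f(n,m) \mapsto f(n,m)\,(\mu\nu)^{n-m}$, $\mu^2 \mapsto \mu$, $\nu^2 \mapsto \nu$) produces a recursion of exactly the same \emph{shape} as \eqref{fnm-3}, but with $2b-\tfrac{3}{2}$ replaced by $b-3$ and with one additional $f(n-1,m-1)$-term whose effect is to shift $(2+\alpha) \leadsto (3+\alpha)$ in the ODE below.

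Finally, I would verify the candidate generating function
\[
F_\alpha(z) = \exp\!\left(-(\mu+\nu)\tfrac{z}{1-z} + b^\star z\right)\sum_{k=0}^\infty \frac{(\mu\nu z)^k}{(k+\alpha)!\,k!}\,(1-z)^{-2k-\alpha-3}
\]
by direct differentiation, exactly as in \eqref{ode-1}--\eqref{ode-2}. The only change in the computation is that the prefactor $(1-z)^{-2-\alpha}$ becomes $(1-z)^{-3-\alpha}$, producing a $\frac{3+\alpha}{1-z}F_\alpha(z)$ term instead of $\frac{2+\alpha}{1-z}F_\alpha(z)$; the algebraic identity $\tfrac{1}{z} + \tfrac{2}{1-z} = \tfrac{1+z}{z(1-z)}$ is unchanged, so one arrives at an ODE of the same form as \eqref{ode-2} with $2+\alpha$ replaced by $3+\alpha$ and $2b-\tfrac{3}{2}$ replaced by $b^\star = b-3$. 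Reading off the coefficient recursion from this ODE reproduces exactly the real-case recursion derived above, and the initial condition $c_\alpha(0)=1/\alpha!$ matches $f(\alpha,0)/(\alpha!\,0!) = 1/\alpha!$ from the real analogue of \eqref{fnm-9}. Since both sides satisfy the same recursion with the same initial value, they coincide.

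The main obstacle is the combinatorial bookkeeping in the first step: correctly identifying every pairing that survives the expectation once $\ee X_{i,m}X_{j,m}$ no longer vanishes, and checking that the extra $11_B$ contributions, together with the new diagonal self-pairing, account exactly for the shift of the exponent from $-2-\alpha$ to $-3-\alpha$ and for the change $2b-\tfrac{3}{2} \rightsquigarrow b-3$. Once the real analogues of \eqref{fnm-a}--\eqref{fnm-j} are written down correctly, the elimination and the ODE check are mechanical repetitions of the complex case.
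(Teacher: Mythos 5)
Your proposal mirrors the paper's own argument: add the $11_B$-term that becomes nonzero once $\ee X_{ij}^2 \neq 0$, repeat the elimination, translate through Lemma~\ref{chiral}, and verify the candidate $F_\alpha$ by the ODE; the end points (replacing $2b-\tfrac32$ by $b-3$, exponent $-2-\alpha$ replaced by $-3-\alpha$) are correct, and the ODE check you describe is exactly the one the paper performs. One small bookkeeping slip: there is no ``self-pairing'' producing an extra $n\,f(n-1,m-1)$ term in the expansion analogous to \eqref{fnm-x}; the only changes there are the new $n(n-1)\,f(n,m-1,11_B,\sdot)$ term and the replacement of $2b+\tfrac12$ by $b$ in the fourth-moment coefficient. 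The additional $n\,f(n-1,m-1)$ (and hence the shift $(2+\alpha)\leadsto(3+\alpha)$ in the coefficient recursion) emerges only after the $11_B$ contribution is carried through the elimination, not directly from the row/column expansion. You also invoke $\ee X_{ij}^3 = 0$, which is not among the paper's standing hypotheses; this is harmless, since any surviving expectation containing a cubed entry would also contain a lone first-power factor and therefore vanishes regardless of the third moment.
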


As already mentioned at the beginning of this section,
the derivation in the real setting is essentially the same 
as that in the complex setting. 
That is why we do not give the full details of the proof 
of Proposition \ref{real-SCM}, but only mention 
some note\-worthy changes:

(i) 
In the real case, we have
$$
\ee X_{ij} = 0 \,, \qquad
\ee X_{ij}^2 = 1 \,, \qquad
\ee X_{ij}^4 = b \,.
$$
In particular, as the second moment is not equal to zero anymore, we get 
an extra term $n(n-1)f(n,m-1,11_B,\,\cdot\,)$ in Equation \eqref{fnm-x},
and
an extra term $m(m-1) \linebreak[2] f(n-1,m,\,\cdot\,,11_B)$ in Equation \eqref{fnm-y}.
Also, the factor $(2b+\tfrac12)$ must be replaced with the factor $b$
everywhere.

(ii)
Similar changes arise in Equations \eqref{fnm-a} -- \eqref{fnm-j}.
The recursive equation for the $11_B$-terms is the same as that for the $11_C$-terms, 
except that all occurrences of $11_C$ must be replaced with occurrences of $11_B$.
In fact, it is not hard to see that $f(n,m,11_B,\,\cdot\,) = f(n,m,11_C,\,\cdot\,)$
and $f(n,m,\,\cdot\,,11_B) = f(n,m,\,\cdot\,,11_C)$ for all $n,m \geq 0$.

(iii)
The analogue of \eqref{fnm-tmpa} reads
\begin{align*}
& \mskip24mu f(n,m-1,11^A,\sdot) + f(n,m-1,11^B,\sdot) + f(n,m-1,11^C,\sdot) \\
&= 3(m-1) f(n-2,m-2)
 + f(n-1,m-1)
 - b (m-1) f(n-2,m-2) \,.
\end{align*}
Consequently, the analogue of \eqref{fnm-tmpb} reads
\begin{multline*}
   f(n,m)
 = \mu \nu f(n,m-1)
 - \mu n f(n,m-1,01,\sdot)
 - \nu n f(n,m-1,10,\sdot) \\
 + \big( n(n+2) + n (b-3) \big) f(n-1,m-1) 
 - \big( n(n-1)(m-1) (b-3) \big) f(n-2,m-2) \,,
\end{multline*}
and the analogue of \eqref{fnm-1} reads
\begin{align*}
   f&(n,m)
 = n(n+m+1) f(n-1,m-1)
 - (n+1)n(n-1)(m-1) f(n-2,m-2) \\ 
&+ (b-3) n \Big( f(n-1,m-1) - 2(n-1)(m-1) f(n-2,m-2) \nonumber\\&\mskip224mu + (n-1)(n-2)(m-1)(m-2) f(n-3,m-3) \Big) \\
&+ \mu \nu f(n,m-1) 
 + \mu \nu n (m-1) f(n-1,m-2)
 - (\mu^2 + \nu^2) n f(n-1,m-1) \,.
\end{align*}
Similar modifications are required for equations \eqref{fnm-1} -- \eqref{fnm-4}.

(iv)
Defining $F_\alpha(z)$ as in Proposition \eqref{real-SCM}
and denoting by $c_\alpha(m)$ the coefficients in the power series representation
$F_\alpha(z) = \sum_{m=0}^{\infty} c_\alpha(m) \, z^m$,
it follows by the~same arguments as for \eqref{c-recursion-1} 
that for $m \geq 1$,
\begin{align*}
   m c_\alpha(m)
&= (2m+\alpha+1) c_\alpha(m-1)
 - (m+\alpha+1) c_\alpha(m-2) \nonumber \\
&+ (b-3) 
   \Big( c_\alpha(m-1) - 2c_\alpha(m-2) + c_\alpha(m-3) \Big) \nonumber \\
&+ \mu \nu c_{\alpha+1}(m-1) 
 + \mu \nu c_{\alpha+1}(m-2) 
 - (\mu + \nu) c_\alpha(m-1) \,.
\end{align*}
Similarly as in the complex setting, this is the same recursive relation
as that for the values $f(m+\alpha,m) / ((m+\alpha)! \, m!) \,.$

% 
% we obtain the differential equations
% \begin{align*}
% F_\alpha'(x) &= \frac{3+\alpha}{1-x} F_\alpha(x) + (b-3) F_\alpha(x) - \frac{(\mu+\nu)}{(1-x)^2} F_\alpha(x) + \mu \nu \frac{1+x}{(1-x)^2} F_{\alpha+1}(x) \,,
% \end{align*}
% which accounts for the corresponding difference between the generating functions
% \eqref{complex-SCMF} and \eqref{real-SCMF}.
%

\bigskip

\section{Outline of the Asymptotic Analysis}

From now on, we will always assume that 
$n \equiv n_N := N$ and $m \equiv m_N := N - \alpha$
depend on $N$, where $\alpha$ is a fixed natural number.
In particular, $m_N / n_N \leq 1$ and $m_N / n_N \to 1$ as $N \to \infty$.
% Let it be mentioned that our approach can be extended to the more general situation
% where $m_N / n_N$ tends to some constant $\gamma \in {]}0,1{]}$ as $N \to \infty$,
% but this requires a more careful treatment of the remainder terms
% in the~subsequent asymptotic analysis and will therefore not be considered here.

Let us begin with the complex setting.
It is well-known from random matrix theory that under the above assumptions, 
the spectrum of the properly rescaled sample covariance matrix $\tfrac{1}{n} X^* X$
is asymptotically concentrated on the interval $[0,4]$, 
with distribution given by the \emph{Mar\v{c}enko-Pastur density}
\begin{align}
\label{mp}
g(\xi) = \frac{1}{2\pi\xi} \sqrt{\xi(4-\xi)} \qquad (\xi \in (0,4)) \,.
\end{align}
% on the set $[(1-\sqrt\gamma)^2;(1+\sqrt\gamma)^2]$ for $\gamma \leq 1$ 
% and on the set $\{ 0 \} \cup [(1-\sqrt{\gamma^{-1}})^2;(1+\sqrt{\gamma^{-1}})^2]$
% for $\gamma \geq 1$.
Moreover, it is well-known from random matrix theory 
that the following 3 regions deserve particular attention:
\begin{enumerate}[(i)]
\item \textbf{the bulk of the spectrum},\\
\ie the region around a point $\xi \in (0,4)$,
% \ie the region $((1-\sqrt\gamma)^2;(1+\sqrt\gamma)^2)$,
\item \textbf{the soft edge of the spectrum},\\
\ie the region around the point $\xi = 4$,
% \ie the region around $(1+\sqrt\gamma)^2$,
% and also the region $(1-\sqrt\gamma)^2$, for $\gamma \ne 1$.
\item \textbf{the hard edge of the spectrum},\\
\ie the region around the point $\xi = 0$.
% \ie the region $(1-\sqrt\gamma)^2$, for $\gamma = 1$.
\end{enumerate}
% (We have given the specifications for the case $\gamma \leq 1$ only,
% the extension to the case $\gamma \geq 1$ being obvious.)
More precisely, it is widely expected that in these regions,
the correlation function of the eigenvalues
(see \textsc{Mehta} \cite{Me} or \textsc{Forrester} \cite{Fo})
is asymptotically given (after the appropriate rescaling
so that the mean spacing between the eigenvalues is of order $1$) by 
\begin{enumerate}[(i)]
\item
the sine kernel
\begin{align}
\label{sine}
\mathbb{S}(x,y) := \frac{\sin \pi(x-y)}{\pi(x-y)} \,,
\end{align}
\item
the Airy kernel
\begin{align}
\label{airy}
\mathbb{A}(x,y) := \frac{\ai(x) \ai'(y) - \ai'(x) \ai(y)}{x-y} \,,
\end{align}
\item
the Bessel kernel
\begin{align}
\label{bessel}
\mathbb{J}_{\alpha}(x,y) := \frac{J_\alpha(\sqrt{x}) \sqrt{y} J_\alpha'(\sqrt{y}) - \sqrt{x} J_\alpha'(\sqrt{x}) J_\alpha(\sqrt{y})}{2(x-y)} \,.
\end{align}
\end{enumerate}
Typically, these results were first obtained for Wishart matrices
(\ie,~Gaussian sample covariance matrices)
and then extended to more general sample covariance matrices:
\begin{enumerate}[(i)]
\item 
For the bulk of the spectrum, \textsc{Ben Arous} and \textsc{P\'ech\'e} \cite{BP} 
established the emergence of the sine kernel for sample covariance matrices $X^* X$
such that the distributions of the entries of the matrix $X$ are Gaussian convolutions.
\item 
For the soft edge of the spectrum, \textsc{Soshnikov} \cite{So} 
established the emergence of the Airy kernel by means of the method of moments
and a sophisticated comparison with the Gaussian case.
\item
For the hard edge of the spectrum, \textsc{Tao} and \textsc{Vu} \cite{TV} 
established the emergence of the Bessel kernel (in the special case where
the matrix $X$ is a square~matrix), also via a sophisticated reduction 
to the Gaussian case.
\end{enumerate}

It is the purpose of the following sections to show that the same kernels
show up at the level of the characteristic polynomial,
for quite general sample covariance matrices.
% Our proofs are comparatively simple.
% Although the exact relation 
% between the correlation function of the characteristic polynomial 
% and the correlation function of the eigenvalues
% remains unclear,
Thus, our results add some support to the universality conjecture 
that the kernels \emph{always} occur,
irrespective of the choice of the underlying distribution~$Q$.
Also, our proofs are comparatively simple,
and they are based on rather weak moment conditions,
considerably weaker ones than those for the above-mentioned results 
from the literature.

The starting point for our asymptotic analysis 
will also be the integral representation
\begin{multline}
\label{intrep1}
\ \ \frac{f(n,m;\mu,\nu)}{n! \, m!}
=
\frac{1}{2 \pi i} \int_\gamma
\exp \left( -(\mu+\nu) \frac{z}{1-z} + b^* z \right) 
\\
\,\cdot\, \sum_{k=0}^{\infty} \frac{(\mu \nu z)^k}{(k+\alpha)! \, k!} (1-z)^{-2k-\alpha-2}
 \ \frac{dz}{z^{m+1}} \,, \ \ 
\end{multline}
which follows from \eqref{complex-SCMF} by Cauchy's formula.
Here, $b^*$ is defined as in \eqref{complex-SCMF}, $\alpha := n - m$, 
and $\gamma$ denotes a~counterclockwise circular path 
of radius $R < 1$, $R \approx 1$ around the origin.
(The precise choice of the radius will be specified 
in the~following sections.)
Since
\begin{align*}
\sum_{k=0}^{\infty} \frac{(\mu \nu z)^k}{(k+\alpha)! \, k!} (1-z)^{-2k}
=
\left( \frac{(\mu \nu z)^{1/2}}{1-z} \right)^{-\alpha} \, I_\alpha \left( \frac{2 (\mu \nu z)^{1/2}}{1-z} \right) \,,
\end{align*}
where $I_\alpha$ denotes the modified Bessel function of order $\alpha$,
\eqref{intrep1} may be rewritten as
\begin{align}
\label{intrep2}
\frac{f(n,m;\mu,\nu)}{n! \, m!}
=
\frac{1}{2 \pi i} \int_\gamma
\frac{\exp \left( -(\mu+\nu) \frac{z}{1-z} + b^* z \right) \cdot I_\alpha \left( \frac{2 (\mu \nu z)^{1/2}}{1-z} \right)}{(1-z)^2 \cdot ((\mu \nu z)^{1/2})^\alpha}
 \ \frac{dz}{z^{m+1}} \,.
\end{align}
We will always use the parametrization $\gamma(t) := Re^{it}$,
$-\pi \leq t \leq +\pi$, and we will always assume that $\sqrt{z}$ 
is chosen such that $\arg \sqrt{z} = 0$ on the positive real axis
and $\arg \sqrt{z}$ is continuous otherwise,
so $\arg \sqrt{z} \in [-\pi/2,+\pi/2]$ as $z$ traverses
the~path $\gamma(t) := Re^{it}$, $-\pi \leq t \leq +\pi$.
Similar remarks apply to the choice of the~branch 
of the modified Bessel function.

In the course of our asymptotic analysis, we will use 
the following classical results about the asymptotic behavior
of the modified Bessel function: For fixed $\alpha \geq 0$,
\begin{align}
\label{bessel-a1}
I_\alpha(z) = \frac{\exp(z)}{\sqrt{2\pi z}} \left( 1 + \myo_{\varepsilon}(1/z) \right) \qquad (|z| \to \infty, |\arg z| \leq \tfrac{\pi}{2} - \varepsilon)
\end{align}
and
\begin{align}
\label{bessel-a2}
|I_\alpha(z)| \leq C \, \frac{\exp(\re z)}{\sqrt{|z|}} \qquad (|z| \to \infty, |\arg z| \leq \tfrac{\pi}{2})
\end{align}
where $C > 0$ is an absolute constant.
(See \eg Chapter~7 in \textsc{Olver} \cite{O}.)
In fact, our results could even be extended to the more general situation 
where $m_N / n_N$ tends to some constant $\gamma \in (0,1)$ as $N \to \infty$.
However, this requires a more careful treatment of remainder terms
(particularly the remainder terms originating from the modified Bessel functions)
and will therefore not be pursued here.

We will show that in all the above-mentioned cases, 
given the appropriate specialization 
of the shift parameters $\mu,\nu$ and the radius $R$,
the main contribution to the integral
comes from a small neighborhood of the point $z = R$
and that we~asymptotically end up with the above-mentioned kernels
from random matrix theory.

Furthermore, by essentially the same proofs, 
we obtain similar results for real sample covariance matrices.
In this setting the correlation function of the characteristic polynomial
is asymptotically given (after the appropriate rescaling) by 
\begin{enumerate}[(i)]
\item
the ``differentiated'' sine kernel
\begin{align}
\label{sine2}
\widetilde{\mathbb{S}}(x,y) := \frac{2\sin \pi(x-y)}{\pi(x-y)^3} - \frac{2\cos \pi(x-y)}{(x-y)^2} \,,
\end{align}
\item
the ``differentiated'' Airy kernel
\begin{multline}
\label{airy2}
\qquad \widetilde{\mathbb{A}}(x,y) := \frac{2 \ai(x) \ai'(y) - 2 \ai'(x) \ai(y)}{(x-y)^3} \\ + \frac{(x+y) \ai(x) \ai(y) - 2 \ai'(x) \ai'(y)}{(x-y)^2} \,, \quad
\end{multline}
\item
the ``differentiated'' Bessel kernel
\begin{multline}
\label{bessel2}
\qquad \widetilde{\mathbb{J}}_{\alpha}(x,y) := \frac{(x+y) \big( J_\alpha(\sqrt{x}) \sqrt{y} J_\alpha'(\sqrt{y}) - \sqrt{x} J_\alpha'(\sqrt{x}) J_\alpha(\sqrt{y}) \big)}{2(x-y)^3} \\ - \frac{(x+y-2\alpha^2) \, J_\alpha(\sqrt{x}) J_\alpha(\sqrt{y}) + 2\sqrt{xy} \, J_\alpha'(\sqrt{x}) J_\alpha'(\sqrt{y})}{4(x-y)^2} \,. \quad
\end{multline} 
\end{enumerate}
These kernels % for the real setting 
are obtained from the corresponding kernels for the complex setting
by applying the differential operators 
$D := \frac{1}{x-y} \left( \frac{\partial}{\partial y} - \frac{\partial}{\partial x} \right)$
(in~cases (i) and (ii)) and
$D := \frac{1}{x-y} \left( y \, \frac{\partial}{\partial y} - x \, \frac{\partial}{\partial x} \right)$
(in~case (iii)).
(Besides that, to~obtain the above formulas for the ``differentiated'' kernels,
we have to use the~differential equation for the Airy~and~Bessel function.)
Since the derivation of the results for real sample covariance matrices
is essentially the same as for complex sample covariance matrices, 
we will refrain from giving the details of the~proofs, 
but only state the final results.

\bigskip

\section{Asymptotics in the Bulk of the Spectrum}

In order to zoom in around a point in the bulk of the spectrum,
we have to make the replacements
$$
\mu \mapsto N \xi + \mu/g(\xi)
\quad\text{and}\quad
\nu \mapsto N \xi + \nu/g(\xi)
$$
for some $\xi \in (0,4)$, $\mu,\nu \in \mathbb{R}$.
(Recall that our sample covariance matrices are unrescaled and that 
$g(\xi)$ denotes density of the Mar\v{c}enko-Pastur distribution.)
Moreover, we have to multiply by the proper scaling factor in order 
to obtain a~non-degenerate limit. We then have the following result:

\begin{theorem}
\label{complex-S}
Let $f$ denote the second-order correlation function of a complex sample covariance matrix
satisfying our standing moment conditions.
For any $\alpha \in \mathbb{N}$, $\xi \in (0,4)$, $\mu,\nu \in \mathbb{R}$, setting
$$
Z_N(\xi,\mu,\nu) := \left( N^2 \xi^2 + N \xi (\mu + \nu) + \mu \nu \right)^{\alpha/2} \, \exp \left( -N\xi - \tfrac12(\mu+\nu) \right) \,,
$$
we have
\begin{multline}
\label{complex-SF}
\lim_{N \to \infty} \left( g(\xi)^{-1} \, Z_N(\xi,\mu/g(\xi),\nu/g(\xi)) \cdot \frac{f(n,m;N\xi+\mu/g(\xi),N\xi+\nu/g(\xi))}{n! \, m!} \right) \\
= \exp(b^*) \, \mathbb{S}(\mu,\nu) \,,
\end{multline}
where $b^* := 2(b-\tfrac34)$ and $\mathbb{S}$ is defined as in \eqref{sine}.
\end{theorem}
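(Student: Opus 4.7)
I would apply the saddle-point method to the integral representation \eqref{intrep2} after substituting $\mu \mapsto N\xi + \mu/g(\xi)$ and $\nu \mapsto N\xi + \nu/g(\xi)$. Parametrising the contour as $\gamma(t) = Re^{it}$ with $R$ slightly below $1$ keeps $|\arg w| < \pi/2$ for $w := 2\sqrt{\mu_N\nu_N z}/(1-z)$ on most of the contour, so by \eqref{bessel-a1} I may replace $I_\alpha(w)$ by $e^w/\sqrt{2\pi w}$. Collecting the $O(N)$ terms in the exponent via $\mu_N + \nu_N = 2N\xi + (\mu+\nu)/g(\xi)$ and $\sqrt{\mu_N\nu_N} = N\xi + (\mu+\nu)/(2g(\xi)) + O(1/N)$, the integrand becomes $\exp(N\Phi(z) + \Psi(z))$ times explicit algebraic prefactors and $z^{-(m+1)}$, where
\[
\Phi(z) = \frac{2\xi\sqrt{z}}{1+\sqrt{z}} - \log z,
\qquad
\Psi(z) = (\alpha-1)\log z + \frac{\mu+\nu}{g(\xi)}\cdot\frac{\sqrt{z}}{1+\sqrt{z}} + b^* z.
\]

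The critical points of $\Phi$ satisfy $\xi\sqrt{z}=(1+\sqrt{z})^2$, which for $\xi \in (0,4)$ gives the complex-conjugate pair $z_\pm = e^{\pm 2i\theta}$ on the unit circle, with $\cos\theta = (\xi-2)/2$ and $\sin\theta = \pi\xi g(\xi)$. I would deform $\gamma$ to pass through $z_\pm$ along the respective steepest-descent directions and apply Laplace's method at each saddle. A direct computation gives $\Phi(z_\pm) = \xi \pm 2i(\sin\theta - \theta)$, so the leading factor $e^{N\xi}$ produced at each saddle cancels $e^{-N\xi}$ in $Z_N$; the $(\mu_N\nu_N)^{\alpha/2}$ in $Z_N$ cancels $(\sqrt{\mu_N\nu_N z})^\alpha$ in the denominator; and the factor $\exp(-(\mu+\nu)/(2g(\xi)))$ in $Z_N$ cancels the contribution $(\mu+\nu)/(2g(\xi))$ from $\re\Psi(z_\pm)$. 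Since $w = \pm i\sqrt{\mu_N\nu_N}/\sin\theta$ is purely imaginary at $z_\pm$, one is on the boundary of validity of \eqref{bessel-a1}, so I would instead use the two-term oscillatory asymptotic $I_\alpha(iy) = i^\alpha J_\alpha(y) \sim i^\alpha \sqrt{2/(\pi y)}\cos(y-\alpha\pi/2-\pi/4)$ near each saddle. Combining the two saddles with the two pieces $e^{\pm iy}$ of the cosine gives four oscillatory contributions; together with the saddle Jacobians $\sqrt{2\pi/(N|\Phi''(z_\pm)|)}$, the Bessel prefactors $1/\sqrt{|w|}$, and the remaining constant phases from $i^\alpha$, $e^{b^*z_\pm}$, $(\alpha-1)\log z_\pm$, and the imaginary part of $N\Phi(z_\pm)$, these four terms pair up into $e^{\pm i\pi(\mu-\nu)}$ and produce the kernel structure $2i\sin\pi(\mu-\nu)$ with the required amplitude. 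Negligibility of the remainder of the contour follows from \eqref{bessel-a2} together with a lower bound of the form $\re(-\Phi(z)) \geq c$ away from $z_\pm$.

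The main obstacle will be the precise bookkeeping of these four oscillatory contributions: one has to check that they combine into $\sin\pi(\mu-\nu)$ rather than a cosine, and that all the apparently $\xi$-dependent algebraic prefactors (through $\theta$ and $|\Phi''(z_\pm)|$) collapse to give exactly $e^{b^*}/(\pi(\mu-\nu))$ as the final constant, times the explicit $g(\xi)^{-1}$ already built into the statement. Once the combinatorics at the saddles is in hand, the remaining convergence statements are routine consequences of \eqref{bessel-a2} and standard Gaussian estimates on the tails.
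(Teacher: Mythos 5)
Your proposal takes a genuinely different route from the paper's, and unfortunately the route as described does not reach the target: the classical steepest-descent analysis through the complex-conjugate saddles $z_\pm = e^{\pm 2i\theta}$ is not what produces the sine kernel here, and in fact those saddles give a contribution that is negligible compared to what actually dominates.

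The paper does \emph{not} deform the contour through $z_\pm$. It fixes $R = 1 - 1/N$, substitutes $z = Re^{iu/N}$, and shows that the whole limit is carried by an arc of angular width $O(1/N)$ around $z = R \approx 1$, where the essential singularity at $z=1$ and the factor $(1-z)^{-2} w^{-1/2}$ are of order $N$. On that arc the $O(N)$ exponent almost cancels, thanks to the $e^{-N\xi}$ absorbed into $Z_N$ together with the leading term of \eqref{bessel-a1}, leaving a genuine $O(1)$ Bromwich-type integral $\frac{1}{2\pi i}\int_{1-i\infty}^{1+i\infty}$. The sine kernel then drops out of the explicit Laplace inversion $\frac{1}{2\pi i}\int_{1-i\infty}^{1+i\infty} e^{tz}\,e^{-a^2/4z}\,z^{-3/2}\,dz = \frac{2\sin(a\sqrt t)}{\sqrt\pi a}$, not from an oscillatory two-saddle interference.

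Why your saddles cannot give the answer. There are at least three independent ways to see that the contribution from $z_\pm$ is subdominant and of the wrong structure.
First, compare orders of magnitude on the contour $|z|=R$. Near $z = R$ one has $|1-z| \sim 1/N$, hence $(1-z)^{-2} \sim N^2$ and $|w| \sim N^2$, so the Bessel prefactor $|w|^{-1/2} \sim N^{-1}$, while $e^{N\Phi}$ is $O(1)$; the net density is $\sim N$ over an arc of length $\sim 1/N$, giving an $O(1)$ contribution. Near $z_\pm$ one has $|1-z_\pm| = 2\sin\theta = O(1)$, so $(1-z)^{-2} = O(1)$, $|w|\sim N$ so the Bessel prefactor is $\sim N^{-1/2}$, and the quadratic variation of $N\Phi$ gives a saddle width $\sim N^{-1/2}$; the net contribution is $O(N^{-1})$ and vanishes.
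Second, the $(\mu-\nu)$-dependence of the sine kernel enters the exponent only through the expansion $2\sqrt{\mu_N\nu_N} = (\mu_N+\nu_N) - \tfrac{(\mu_N-\nu_N)^2}{2(\mu_N+\nu_N)} + \cdots$, which produces a term $-\tfrac{(\mu-\nu)^2}{4N\xi g(\xi)^2}\cdot\frac{\sqrt z}{1-z}$. Since $\frac{\sqrt z}{1-z} = O(N)$ when $|1-z| \sim 1/N$ but $O(1)$ at $z_\pm$, this term is $O(1)$ only near $z=1$ and is $O(1/N)$ at the saddles. All the phases you list at $z_\pm$ (from $i^\alpha$, $e^{b^*z_\pm}$, $(\alpha-1)\log z_\pm$, $\im N\Phi(z_\pm)$) depend on $\mu,\nu$ only through $\mu+\nu$ and $\mu\nu$, so the advertised pairing into $e^{\pm i\pi(\mu-\nu)}$ cannot occur; the saddle contribution has no $(\mu-\nu)$-structure in the limit.
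Third, the overall constant $e^{b^*}$ also cannot come from $z_\pm$: there one would get $e^{b^*\cos 2\theta}$, which is wrong.

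There is also a structural obstruction to a clean Plancherel-Rotach argument: for $\xi\in(0,4)$ the phase $\Phi_{\rm paper}(x) = -\xi\frac{1-\sqrt x}{1+\sqrt x} - \log x$ is strictly positive and decreasing on $(0,1)$, with $\Phi_{\rm paper}(1^-) = 0^+$. Any closed contour in $\{0 < |z| < 1\}$ winding once around $0$ must cross $(0,1)$, hence must enter the region $\re\Phi_{\rm paper} > 0$. So you cannot choose a contour on which $\re\Phi \leq 0$ except at the saddles; the only way to tame the positive part is to push the contour to within $O(1/N)$ of $z=1$ so that $N\re\Phi_{\rm paper}$ is $O(1)$ there — and that is precisely the paper's localization near $z=1$, not a steepest-descent through $z_\pm$. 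Your $\Phi''(z_\pm)$ computation and the evaluation $\Phi(z_\pm) = \xi \pm 2i(\sin\theta-\theta)$ are both correct, but they concern a sub-leading contribution; the argument you would actually need is the Laplace-inversion analysis near $z = 1$, which is of an entirely different nature.
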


\begin{proof}[Proof of Theorem \ref{complex-S}]
By (\ref{intrep2}), we have
\begin{multline}
\label{intrep11}
Z_N(\xi,\mu,\nu) \cdot \frac{f(n,m;N\xi+\mu,N\xi+\nu)}{n! \, m!} \\ = 
\frac{1}{2 \pi i} \int_\gamma
\frac{\exp \left( -(N\xi+\tfrac12(\mu+\nu)) \frac{1+z}{1-z} + b^*z \right) \cdot I_\alpha \left( w(z) \right)}{(1-z)^2 \cdot z^{(n+m)/2}}
\ \frac{dz}{z} \,, 
\end{multline}
where we have put
$$
w(z) = w_N(z) := 2 \sqrt{N^2 \xi^2 + N \xi (\mu + \nu) + \mu \nu} \cdot \frac{\sqrt{z}}{1-z}
% w(z) := 2 \left( N \xi + \tfrac12 (\mu + \nu) - \tfrac18 (\mu - \nu)^2 / N \xi + \mathcal{O}(1/N^2) \right) \frac{\sqrt{z}}{1-z}
$$
for abbreviation. 
We take $R := 1 - 1/N$.
We reduce the proof of Theorem \ref{complex-S} to~the following two claims:

On the one hand, we will show that, for any (fixed) $a > 0$,
\begin{align}
\label{claim11}
\frac{1}{2 \pi i} \int_{\gamma'}
\frac{\exp \left( -(N\xi+\tfrac12(\mu+\nu)) \frac{1+z}{1-z} + b^*z \right) \cdot I_\alpha(w(z))}{(1-z)^2 \cdot z^{(n+m)/2}}
 \ \frac{dz}{z}
\overset{N \to \infty}{-\!\!\!-\!\!\!-\!\!\!\to} S(a) \,,
\end{align}
where $\gamma' \equiv \gamma'(a)$ denotes the restriction of the path $\gamma$
to the set $(-a/N,+a/N)$, and
$$
S(a) := \frac{\exp(b^*)}{4\pi^{3/2}\xi^{1/2}} \int_{-a}^{+a} \frac{\exp((1-\tfrac14\xi) \cdot (1-iu) - \tfrac14(\mu-\nu)^2\xi^{-1} / (1-iu))}{(1-iu)^{3/2}} \, du \,.
$$

On the other hand, we will show that for any $\delta > 0$, there exists 
some $a_0(\delta) > 0$ such that for $a \geq a_0(\delta)$, we have
\begin{align}
\label{claim12}
\left| \frac{1}{2 \pi i} \int_{\gamma''}
\frac{\exp \left( -(N\xi+\tfrac12(\mu+\nu)) \frac{1+z}{1-z} + b^*z \right) \cdot I_\alpha(w(z))}{(1-z)^2 \cdot z^{(n+m)/2}}
 \ \frac{dz}{z} \right|
\leq \delta
\end{align}
for all $N \in \mathbb{N}$ large enough.
Here, $\gamma'' \equiv \gamma''(a)$ denotes the restriction of the path $\gamma$
to the set $(-\pi,+\pi) \setminus (-a/N,+a/N)$.

It is easy to see that the main result \eqref{complex-SF} follows 
by combining \eqref{claim11} and \eqref{claim12}
and by using Laplace inversion.
Indeed, firstly, \eqref{claim11}~and~\eqref{claim12} imply that
$$
\frac{1}{2 \pi i} \int_{\gamma}
\frac{\exp \left( -(N\xi+\tfrac12(\mu+\nu)) \frac{1+z}{1-z} + b^*z \right) \cdot I_\alpha(w(z))}{(1-z)^2 \cdot z^{(n+m)/2}}
 \ \frac{dz}{z}
\overset{N \to \infty}{-\!\!\!-\!\!\!-\!\!\!\to} S(\infty) \,,
$$
where
$$
S(\infty) := \frac{\exp(b^*)}{4\pi^{3/2}i\xi^{1/2}} \int_{1-i\infty}^{1+i\infty} \frac{\exp((1-\tfrac14\xi) z - \tfrac14(\mu-\nu)^2\xi^{-1} / z)}{z^{3/2}} \, dz \,.
$$
Secondly, by Laplace inversion, for $t > 0$, $a \in \mathbb{R}$,
$$
\frac{1}{2\pi i} \int_{1-i\infty}^{1+i\infty} e^{tz} \, \frac{e^{-a^2/4z}}{z^{3/2}} \ dz 
= 
\frac{2 \sin (a \sqrt{t})}{\sqrt{\pi} a}
$$
(see \eg p.\,245 in \cite{BMP}), whence
$$
S(\infty) = \tfrac{1}{\pi} \exp(b^*) \left( (1-\tfrac{1}{4}\xi)/\xi \right)^{1/2} \cdot \frac{\sin \left( (\mu-\nu) \left( (1-\tfrac14\xi)/\xi \right)^{1/2} \right)}{\left( (\mu-\nu) \left( (1-\tfrac14\xi)/\xi \right)^{1/2} \right)} \,.
$$
Replacing the local shift parameters $\mu,\nu$ with $\mu/g(\xi),\nu/g(\xi)$,
dividing by $g(\xi)$ 
and noting that $\left( (1-\tfrac{1}{4}\xi)/\xi \right)^{1/2} = \pi g(\xi)$
yields \eqref{complex-SF}.

\pagebreak[2]
\medskip

To prove \eqref{claim11} and \eqref{claim12}, it turns out convenient
to rewrite the integrand in \eqref{intrep11} as 
$$
\frac{f_1(z) f_2(z)}{(1-z)^{2} \, z^{(n+m)/2}} \,,
$$
where
$$
f_1(z) := \exp \left( -(N\xi + \tfrac12(\mu+\nu)) \frac{1+z}{1-z} + b^*z \right) \cdot \exp \left( + w(z) \right)
$$
and
$$
f_2(z) := \exp \left( - w(z) \right) \cdot I_\alpha \left( w(z) \right) \,.
$$
Since
$$
w(z) = 2 \left( N \xi + \tfrac12 (\mu + \nu) - \tfrac18 (\mu - \nu)^2 / N \xi + \myo(1/N^2) \right) \frac{\sqrt{z}}{1-z}
$$
and
$$
- \frac{1+z}{1-z} + \frac{2\sqrt{z}}{1-z} = - \frac{(1-\sqrt{z})^2}{(1+\sqrt{z})(1-\sqrt{z})} = - \frac{1-\sqrt{z}}{1+\sqrt{z}} \,,
$$
we find that
\begin{multline*}
f_1(z) = \exp \bigg( -N\xi \frac{1-\sqrt{z}}{1+\sqrt{z}} - \tfrac12(\mu+\nu) \frac{1-\sqrt{z}}{1+\sqrt{z}} - \tfrac14 (\mu-\nu)^2 / (N\xi) \, \frac{\sqrt{z}}{1-z} \\ + b^*z + \myo (1/N^2) \, \frac{\sqrt{z}}{1-z} \bigg) \,.
\end{multline*}
Here and in the sequel, we take the convention 
that the implicit constants in the $\myo$-terms 
may depend on $\alpha,b^*,\xi,\mu,\nu$ (which are regarded as fixed).

\medskip

We first prove (\ref{claim11}). In doing so, we use the notation $\myo_a$ 
to denote a bound involving an implicit constant depending also on $a$ 
(in addition to $\alpha,b^*,\xi,\mu,\nu$).
Substituting $z = Re^{it}$ and $t = u/N$ on the left-hand side 
in \eqref{claim11}, we obtain
\begin{align}
\label{step11a}
\frac{1}{2 \pi N} \int_{-a}^{+a}
\frac{f_1(\gamma(u/N)) \, f_2(\gamma(u/N))}{(1-\gamma(u/N))^2 \cdot (\gamma(u/N))^{(n+m)/2}}
 \ du \,.
\end{align}
Now, by straightforward Taylor expansion, we have the approximations, 
for $|u| \leq a$,
\begin{align*}
f_1(Re^{iu/N}) &= \exp \big( -\tfrac14\xi(1-iu) - \tfrac14(\mu-\nu)^2 \xi^{-1} / (1-iu) + b^* \big) \left( 1 + \myo_a(N^{-1}) \right) \,,
\\
% f_2(Re^{iu/N}) &= \frac{\exp\left( - \tfrac14 (1-\gamma_\infty)^2 (1-iu) / \xi \right) }{\sqrt{4\pi N^2 \xi / (1-iu) }} (1 + o_a(1)) \,,
f_2(Re^{iu/N}) &= \frac{1}{\sqrt{4\pi N^2 \xi / (1-iu)}} \left( 1 + \myo_a(N^{-1}) \right) \,,
\\
(1-Re^{iu/N})^2 &= (1-iu)^2/N^2 \left( 1 + \myo_a(N^{-1}) \right) \,,
\\
% (Re^{iu/N})^{(m+n)/2} &= \exp \big( -\tfrac12 (1-iu) (1+\gamma_\infty) \big) \left( 1 + \myo_a(N^{-1}) \right) \,,
(Re^{iu/N})^{(m+n)/2} &= \exp \big( - (1-iu) \big) \left( 1 + \myo_a(N^{-1}) \right) \,.
\end{align*}
For the second approximation, we have used the observation that for $|u| \leq a$,
$
w(Re^{iu/N}) = 2N^2 \xi / (1-iu) \left( 1 + \myo_a(N^{-1}) \right),
$
as well as the asymptotic approxi\-mation \eqref{bessel-a1} 
for the modified Bessel function.
Inserting the preceding approxi\-mations into \eqref{step11a}, 
it~follows that the left-hand side in \eqref{step11a} converges to
$$
\frac{\exp(b^*)}{4\pi^{3/2}\xi^{1/2}} \int_{-a}^{+a} \frac{\exp((1-\tfrac14\xi) \cdot (1-iu) - \tfrac14(\mu-\nu)^2\xi^{-1} / (1-iu))}{(1-iu)^{3/2}} \, du \,,
$$
which proves \eqref{claim11}.

\medskip

We now turn to the proof of (\ref{claim12}). 
% Clearly, neglecting constants of order $\myo(1)$,
It suffices to show that for $a \geq a_0(\delta)$,
\begin{align}
\label{step12a}
\frac{1}{2 \pi} \int_{[-\pi,-a/N] \,\cup\, [+a/N,+\pi]} \frac{|f_1(Re^{it})||f_2(Re^{it})|}{|1-Re^{it}|^2 \, |Re^{it}|^{(n+m)/2}} \ dt \leq \delta \,.
\end{align}
Write $\sqrt{Re^{it}} = re^{i\varphi}$ 
with $r = \sqrt{1 - 1/N}$, $\varphi \in [-\pi/2,+\pi/2]$.
Then,
$$
  \re \left( \frac{1-re^{i\varphi}}{1+re^{i\varphi}} \right)
% = \re \left( \frac{(1-re^{i\varphi})(1+re^{-i\varphi})}{(1+re^{i\varphi})(1+re^{-i\varphi})} \right)
% = \frac{1 - r^2}{(1+r \cos\varphi)^2 + (r \sin\varphi)^2}
% \\
= \frac{1 - r^2}{1 + r^2 + 2r \cos\varphi} \geq \frac{1}{4N} \,,
% \geq \frac{1 - r^2}{1 + r^2 + 2r}
% = \frac{1 - r^2}{(1+r)^2}
% \geq \frac{1/N}{4} 
$$
$$
  \left| \frac{1-re^{i\varphi}}{1+re^{i\varphi}} \right|
= \frac{   \left| 1-re^{i\varphi} \right| }{ \left| 1+re^{i\varphi} \right| } 
\leq 1 \,,
$$
$$
\left| \frac{re^{i\varphi}}{1-(re^{i\varphi})^2} \right| \leq \frac{1}{1-r^2} = N  \,,
$$
from which it follows that 
\begin{align}
\label{step12b}
f_1(Re^{it}) = \myo(1) \,.
\end{align}
Also, for $N \in \mathbb{N}$ sufficiently large,
$$
\big| w(Re^{it}) \big| \geq N\xi \left| \frac{re^{i\varphi}}{1-(re^{i\varphi})^2} \right| \ge \tfrac12 N\xi \, \frac{1}{|1-Re^{it}|} \ge \tfrac{1}{4} N\xi 
$$
and
\begin{align*}
   \re \left( w(Re^{it}) \right) 
&= 2 \sqrt{N^2 \xi^2 + N \xi (\mu + \nu) + \mu \nu} \cdot \re \left( \frac{re^{i\varphi}}{1-(re^{i\varphi})^2} \right) \\
&= 2 \sqrt{N^2 \xi^2 + N \xi (\mu + \nu) + \mu \nu} \cdot \re \left( \frac{(r-r^3)\cos \varphi}{|1-Re^{it}|^2} \right)
\geq 0 \,,
\end{align*}
from which it follows using \eqref{bessel-a2} that 
\begin{align}
\label{step12c}
|f_2(Re^{it})| = \myo \left( |1-Re^{it}|^{1/2} /\, \sqrt{N} \right) \,.
\end{align}
Finally,
\begin{align}
\label{step12d}
|(Re^{it})^{-(n+m)/2}| = (1-1/N)^{-(N-\alpha/2)} = \myo(1) \,.
\end{align}
Thus, by \eqref{step12b}, \eqref{step12c}, \eqref{step12d} and symmetry, 
it remains to show that for $a \geq a_0(\delta)$,
\begin{align}
\label{step12e}
\frac{1}{2 \pi} \int_{a/N}^{\pi} \frac{1}{|1-Re^{it}|^{3/2}} \ dt \leq \delta N^{1/2} \,.
\end{align}
To this end, let $c > 0$ be a~small constant such that
$\cos t \leq 1 - c^2 t^2$ for all $|t| \leq \pi$.
Then we have
$$
  |1-Re^{it}|^2 
= (1-R\cos t)^2 + (R\sin t)^2 
= (1-R)^2 + 2R(1-\cos t)
\geq 
  c^2 t^2
$$
and therefore
\begin{align}
\label{stdest}
\int_{a/N}^{\pi} |1-Re^{it}|^{-3/2} \ dt
\leq
\int_{a/N}^{\infty} (ct)^{-3/2} \ dt
=
2(ca/N)^{-1/2} \,,
\end{align}
which entails \eqref{step12e} by choosing $a_0(\delta) > 0$ large~enough.

\medskip

This completes the proof of Theorem \ref{complex-S}.
\end{proof}

\pagebreak[2]

With essentially the same proof, we obtain the following result:

\begin{theorem}
\label{real-S}
Let $f$ denote the second-order correlation function of a real sample covariance matrix
satisfying our standing moment conditions.
For any $\alpha \in \mathbb{N}$, $\xi \in (0,4)$, $\mu,\nu \in \mathbb{R}$, setting
$$
Z_N(\xi,\mu,\nu) := \left( N^2 \xi^2 + N \xi (\mu + \nu) + \mu \nu \right)^{\alpha/2} \, \exp \left( -N\xi - \tfrac12(\mu+\nu) \right) \,,
$$
we have
\begin{multline}
\label{real-SF}
\lim_{N \to \infty} \left( N^{-1} \, \xi^{-1} \, \big( g(\xi) \big)^{-3} \, Z_N(\xi,\mu/g(\xi),\nu/g(\xi)) \cdot \frac{f(n,m;N\xi+\mu/g(\xi),N\xi+\nu/g(\xi))}{n! \, m!} \right) \\
= \exp(b^*) \, \widetilde{\mathbb{S}}(\mu,\nu) \,,
\end{multline}
where $b^* := (b-3)$ and $\widetilde{\mathbb{S}}$ is defined as in \eqref{sine2}.
\end{theorem}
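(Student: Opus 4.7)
The proof follows the strategy of Theorem \ref{complex-S}, modified to account for the extra factor $(1-z)^{-1}$ that Proposition \ref{real-SCM} introduces into the generating function. My first step is to write the integral representation
$$
\frac{f(n,m;\mu,\nu)}{n!\,m!} = \frac{1}{2\pi i}\int_\gamma \frac{\exp\!\left(-(\mu+\nu)\tfrac{z}{1-z}+b^*z\right) I_\alpha(w(z))}{(1-z)^3 \cdot ((\mu\nu z)^{1/2})^\alpha}\,\frac{dz}{z^{m+1}},
$$
where $b^* := b-3$ and $w(z) := 2(\mu\nu z)^{1/2}/(1-z)$, obtained from \eqref{real-SCMF} by Cauchy's formula; the only structural difference from \eqref{intrep2} is the extra $(1-z)^{-1}$ in the denominator. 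After the shift $\mu \mapsto N\xi+\mu/g(\xi)$, $\nu \mapsto N\xi+\nu/g(\xi)$ and multiplication by the prefactor $N^{-1}\xi^{-1}g(\xi)^{-3}Z_N$, the integrand factors as $f_1(z)f_2(z)/[(1-z)^3 z^{(n+m)/2+1}]$ with the same $f_1,f_2$ used in the proof of Theorem \ref{complex-S}. I take $R := 1 - 1/N$ and split $\gamma = \gamma' \cup \gamma''$ as there.

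For the local piece $\gamma'$, the Taylor approximations of $f_1$, $f_2$ and $z^{(n+m)/2}$ at $z = Re^{iu/N}$ carry over verbatim, with the only new ingredient
$$
(1-Re^{iu/N})^{-3} = \frac{N^3}{(1-iu)^3}\bigl(1+\myo_a(1/N)\bigr),
$$
so that the extra power of $N$ is exactly absorbed by the prefactor $N^{-1}$. The local integral therefore converges to the same expression as in \eqref{claim11} but with exponent $5/2$ in place of $3/2$ in the denominator. For the tail $\gamma''$, the pointwise bounds in \eqref{step12b}, \eqref{step12c}, \eqref{step12d} copy over unchanged, so that \eqref{step12e} is replaced by the task of showing
$$
\int_{a/N}^\pi |1-Re^{it}|^{-5/2}\,dt \leq \delta\,N^{3/2},
$$
which follows from $|1-Re^{it}| \geq c|t|$ exactly as in \eqref{stdest}, since $\int_{a/N}^\infty (ct)^{-5/2}\,dt = \tfrac{2}{3}(ca/N)^{-3/2}$, and the extra factor of $N$ is again absorbed by the $N^{-1}$ in the prefactor.

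The main obstacle is the final Laplace inversion: one must show that
$$
\frac{1}{2\pi i}\int_{1-i\infty}^{1+i\infty}\frac{e^{tz-a^2/(4z)}}{z^{5/2}}\,dz,
$$
evaluated at $t = 1-\tfrac14\xi$ and $a = |\mu-\nu|/\sqrt{\xi}$, and then re-expressed in terms of the shift parameters $\mu/g(\xi)$, $\nu/g(\xi)$, reproduces the differentiated sine kernel $\widetilde{\mathbb{S}}(\mu,\nu)$ from \eqref{sine2}. The inverse Laplace transform of $e^{-a^2/(4z)}/z^{5/2}$ can be computed by integrating the classical identity $\mathcal{L}^{-1}\{e^{-a^2/(4z)}/z^{3/2}\}(t) = 2\sin(a\sqrt{t})/(\sqrt\pi a)$ in the $t$-variable (using $\mathcal{L}\{\int_0^t g\,ds\}(z) = \mathcal{L}\{g\}(z)/z$), yielding a linear combination of $\sin(a\sqrt{t})/a^3$ and $\sqrt{t}\cos(a\sqrt{t})/a^2$. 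Substituting $t = 1-\tfrac14\xi$ and $\pi g(\xi) = ((1-\tfrac14\xi)/\xi)^{1/2}$ and regrouping, these two terms recombine into exactly the two summands of $\widetilde{\mathbb{S}}$ in \eqref{sine2}; this is consistent with, and can also be read off from, the relation $\widetilde{\mathbb{S}} = D\,\mathbb{S}$ noted in Section~3. This algebraic reorganization is the only genuinely new computation beyond the proof of Theorem \ref{complex-S}.
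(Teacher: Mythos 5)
Your proposal is correct and follows exactly the route the paper intends when it says ``with essentially the same proof'': you carry over the integral representation with the extra $(1-z)^{-1}$, track the resulting extra power of $N$ into the prefactor, and replace the $z^{-3/2}$ Laplace inversion with a $z^{-5/2}$ one computed by integrating the classical $3/2$ identity in $t$. The final regrouping does indeed give $\widetilde{\mathbb{S}}=D\mathbb{S}$ with the right powers of $\xi$ and $g(\xi)$ absorbed by the prefactor, so no gaps.
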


\bigskip

\section{Asymptotics at the Soft Edge of the Spectrum}

In order to zoom in at the soft edge of the spectrum,
we have to make the replacements
$$
\mu \mapsto 4N + 2^{4/3} \mu N^{1/3} \quad \text{and} \quad \nu \mapsto 4N + 2^{4/3} \nu N^{1/3}
$$
where $\mu,\nu \in \mathbb{R}$. (The factor $2^{4/3}$ is for convenience.)
We then have the following result:

\begin{theorem}
\label{complex-A}
Let $f$ denote the second-order correlation function of a complex sample covariance matrix
satisfying our standing moment conditions.
For any $\alpha \in \mathbb{N}$, $\mu,\nu \in \mathbb{R}$, setting
$$
Z_N(\mu,\nu) := \left( 16N^2 + 4 (\mu + \nu) N^{4/3} + \mu \nu N^{2/3} \right)^{\alpha/2} \, \exp \left( - 4N - \tfrac12(\mu+\nu) N^{1/3} \right) \,,
$$
we have
\begin{multline}
\label{complex-AF}
\lim_{N \to \infty} \left( 2^{4/3} \, N^{1/3} \, Z_N(2^{4/3} \mu,2^{4/3} \nu) \cdot \frac{f(n,m;4N+2^{4/3} \mu N^{1/3},4N+2^{4/3} \nu N^{1/3})}{n! \, m!} \right) \\
= \exp(b^*) \, \mathbb{A}(\mu,\nu) \,, % \frac{\ai'(\mu) \ai(\nu) - \ai(\mu) \ai'(\nu)}{\mu-\nu} \,,
\end{multline}
where $b^* := 2(b-\tfrac34)$ and $\mathbb{A}$ is defined as in \eqref{airy}.
\end{theorem}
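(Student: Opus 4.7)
The plan is to mirror the three-step structure of the proof of Theorem \ref{complex-S}. Starting from \eqref{intrep2} with $\mu,\nu$ replaced by their edge-scaled values $4N+2^{4/3}\mu N^{1/3}$ and $4N+2^{4/3}\nu N^{1/3}$ and multiplied by the normalization $2^{4/3}N^{1/3}\,Z_N(2^{4/3}\mu,2^{4/3}\nu)$, I would decompose $\gamma$ into a local arc $\gamma'(a)$ and a tail $\gamma''(a)$, establish the local convergence analogue of \eqref{claim11}, prove the tail bound analogue of \eqref{claim12}, and let $a\to\infty$. The essential new feature is that the relevant saddle is now \emph{cubic}: writing the leading $N$-scale exponent as $N\Phi(z)$ with $\Phi(z):=8\sqrt{z}/(1+\sqrt{z})-\log z$, one checks $\Phi'(1)=\Phi''(1)=0$ and, with $\epsilon:=\sqrt{z}-1$, that $\Phi(z)=4-\epsilon^3/6+O(\epsilon^4)$. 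This forces the natural scale to be $\epsilon\sim N^{-1/3}$, so I would take the radius $R:=1-cN^{-1/3}$ (any fixed $c>0$ works, and the final answer is independent of $c$ by Cauchy's theorem) and the local substitution $t=sN^{-1/3}$, which gives $\epsilon=(is-c)/(2N^{1/3})+O(N^{-2/3})$ on $\gamma'(a)$.

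For the local analysis I would Taylor expand the full exponent around $z=1$. The $O(N)$ and $O(N^{1/3})$ contributions equal $4N+2^{1/3}(\mu+\nu)N^{1/3}$ and are cancelled exactly by the exponential factor in $Z_N$. What survives is the cubic $-N\epsilon^3/6$, which becomes $-\sigma^3/6$ with $\sigma:=(is-c)/2$; a linear-in-$\sigma$ piece $2^{1/3}(\mu+\nu)\sigma/2$ coming from the $N^{1/3}$-scale coefficient in $(\mu+\nu)\sqrt{z}/(1+\sqrt{z})$; and a $1/\sigma$-term $2^{-7/3}(\mu-\nu)^2/\sigma$ coming from the next $O(N^{-1/3})$ correction to $\sqrt{\mu\nu}$ multiplied against $\sqrt{z}/(1-z)=-1/(2\epsilon)+O(1)$. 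Using the Bessel asymptotic \eqref{bessel-a1} (valid because $|w(z)|\sim N^{4/3}$ and $|\arg w(z)|$ stays well inside $(-\pi/2,\pi/2)$ on $\gamma'(a)$), the algebraic factor $1/\sqrt{2\pi w(z)}$ contributes a $\sqrt{c-is}$ at order $N^{-2/3}$ which together with $(1-z)^{-2}\sim N^{2/3}/(4\sigma^2)$ cancels the outer prefactor $2^{4/3}N^{1/3}$ exactly. Changing variable via $\zeta=c-is$, the local contribution converges, as $N\to\infty$ and then $a\to\infty$, to
$$
S_{\mathrm{edge}}\;:=\;\frac{e^{b^*}}{2^{5/3}\,i\,\pi^{3/2}}\int_{c-i\infty}^{c+i\infty}\frac{\exp\!\bigl(\zeta^3/48-2^{-5/3}(\mu+\nu)\zeta-2^{-4/3}(\mu-\nu)^2/\zeta\bigr)}{\zeta^{3/2}}\,d\zeta.
$$

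To identify $S_{\mathrm{edge}}$ with $e^{b^*}\mathbb{A}(\mu,\nu)$, I would reduce the classical double-integral representation $\mathbb{A}(\mu,\nu)=\tfrac{1}{(2\pi i)^2}\iint(s+t)^{-1}\exp(s^3/3-\mu s+t^3/3-\nu t)\,ds\,dt$ (with right half-plane contours, which follows from $\mathbb{A}(\mu,\nu)=\int_0^\infty\ai(\mu+r)\ai(\nu+r)\,dr$ together with the Airy integral representation) to a single integral by the linear change of variables $\eta=s+t$, $\xi=s-t$: the $\xi$-integral becomes Gaussian and its evaluation along the imaginary axis gives $e^{-(\mu-\nu)^2/(4\eta)}\cdot 2i\sqrt{\pi/\eta}$ (valid for $\re\eta>0$), leaving
$$
\mathbb{A}(\mu,\nu)\;=\;\frac{-i}{4\pi^{3/2}}\int_{c'-i\infty}^{c'+i\infty}\frac{\exp\!\bigl(\eta^3/12-(\mu+\nu)\eta/2-(\mu-\nu)^2/(4\eta)\bigr)}{\eta^{3/2}}\,d\eta.
$$
The rescaling $\eta=2^{-2/3}\zeta$ matches this single integral against $S_{\mathrm{edge}}$ term by term, and all powers of $2$ and $\pi$ collapse to $S_{\mathrm{edge}}=e^{b^*}\mathbb{A}(\mu,\nu)$. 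The tail bound goes through essentially verbatim as in Theorem \ref{complex-S}: use \eqref{bessel-a2} to control $|I_\alpha(w(z))e^{-w(z)}|=O(|1-Re^{it}|^{1/2}/N^{1/2})$, bound the remaining factors by $O(1)$, and apply the standard estimate $\int_{a/N^{1/3}}^\pi|1-Re^{it}|^{-3/2}\,dt=O((a/N^{1/3})^{-1/2})$ from \eqref{stdest}; combined with the outer $N^{1/3}$, the tail is $O(a^{-1/2})$ uniformly in $N$. The main obstacle in my view is the identification step: tracking all the powers of $2$ coming from the scaling $\mu\mapsto 4N+2^{4/3}\mu N^{1/3}$ through the Taylor expansion of the saddle, through the Bessel asymptotic (which produces the crucial $\zeta^{-3/2}$ in the integrand as the product of the $(1-z)^{-2}\sim\zeta^{-2}$ and $1/\sqrt{w}\sim\zeta^{1/2}$ factors), and through the Gaussian reduction of the Airy double integral to a single integral, and verifying that they collapse to unity.
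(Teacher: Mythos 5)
Your proposal follows the paper's proof essentially line for line: same decomposition into a local arc $\gamma'(a)$ and tail $\gamma''(a)$, same radius scale $R=1-cN^{-1/3}$ (the paper fixes $c=1$), same use of the identity $-\frac{1+z}{1-z}+\frac{2\sqrt z}{1-z}=-\frac{1-\sqrt z}{1+\sqrt z}$ and the Bessel asymptotic \eqref{bessel-a1} to produce the cubic saddle, and the same terminal single-contour integral. Your Taylor bookkeeping is correct; I checked the $\epsilon^3$-coefficient, the linear $2^{-2/3}(\mu+\nu)\sigma$ term, the $2^{-7/3}(\mu-\nu)^2/\sigma$ term arising from the $O(N^{-1/3})$ correction to $\sqrt{\mu_{\rm edge}\nu_{\rm edge}}$, and the power-counting that collapses $(1-z)^{-2}\cdot(2\pi w)^{-1/2}\cdot 2^{4/3}N^{1/3}$ to $2^{4/3}/(4\sqrt\pi\,\zeta^{3/2})$; after $\eta=2^{-2/3}\zeta$ your $S_{\rm edge}$ indeed equals $e^{b^*}\mathbb A(\mu,\nu)$. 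The one genuine stylistic difference is that you rederive the single-integral representation of $\mathbb A$ from the double-integral formula via a Gaussian $\xi$-reduction, whereas the paper simply cites Proposition~2.2 of \cite{Ko2}; your derivation is correct and makes the argument self-contained.

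The one place you gloss over a real difficulty is the tail bound. You say it "goes through essentially verbatim as in Theorem~\ref{complex-S}", but that is not quite right. In the bulk proof, \eqref{step12b} bounds $f_1(Re^{it})=\myo(1)$ and \eqref{step12d} bounds $|(Re^{it})^{-(n+m)/2}|=\myo(1)$ \emph{separately}, which works because $R=1-1/N$ makes $(1-1/N)^{-(N-\alpha/2)}$ bounded. At the soft edge with $R=1-N^{-1/3}$ this fails: $|(Re^{it})^{-(n+m)/2}|\sim\exp(N^{2/3}+\tfrac12N^{1/3})$ blows up, and $|f_1|$ alone decays only like $\exp(-4N^{2/3}\cdot(1+r^2+2r\cos\varphi)^{-1})$, so neither factor is $\myo(1)$ by itself. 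The paper's \eqref{step22a} handles this with a \emph{joint} estimate: writing $\sqrt{Re^{it}}=re^{i\varphi}$ and using $r\leq 1-\tfrac12N^{-1/3}$, it shows the combined exponent $-4N^{2/3}/(2r(1+\cos\varphi)+(1-r)^2)+N^{2/3}+\tfrac12N^{1/3}$ is bounded above by a constant \emph{for all} $\varphi$, which requires the elementary but non-obvious inequality $2r(1+\cos\varphi)+(1-r)^2<4$ together with careful tracking of the $N^{1/3}$ correction. You would need to include this estimate of $f_1/z^{(m+n)/2}$ explicitly; "bound the remaining factors by $\myo(1)$" as separate bounds would fail. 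Once that is in place, your $\int_{a/N^{1/3}}^\pi|1-Re^{it}|^{-3/2}\,dt=\myo(N^{1/6}a^{-1/2})$ together with $|f_2|=\myo(|1-Re^{it}|^{1/2}N^{-1/2})$ does give a tail $\myo(a^{-1/2})$ uniform in $N$, exactly as in the paper.
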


\begin{proof}[Proof of Theorem \ref{complex-A}]
By (\ref{intrep2}), we have
\begin{multline}
\label{intrep21}
N^{1/3} \, Z_N(\mu,\nu) \cdot \frac{f(n,m;4N+\mu N^{1/3},4N+\nu N^{1/3})}{n! \, m!} \\ = 
\frac{N^{1/3}}{2 \pi i} \int_\gamma
\frac{\exp \left( -(4N+\tfrac12(\mu+\nu)N^{1/3}) \frac{1+z}{1-z} + b^*z \right) \cdot I_\alpha \left( w(z) \right)}{(1-z)^2 \cdot z^{(n+m)/2}}
\ \frac{dz}{z} \,, 
\end{multline}
where now
$$
w(z) := 2 \sqrt{16N^2 + 4 (\mu+\nu) N^{4/3} + \mu \nu N^{2/3}} \cdot \frac{\sqrt{z}}{1-z} \,.
$$
This time we take $R := 1 - 1/N^{1/3}$.
Similarly as for the bulk of the spectrum, 
we~reduce the proof to the following two claims:

On the one hand, we will show that for any (fixed) $a > 0$,
\begin{align}
\label{claim21}
\frac{N^{1/3}}{2\pi i} \int_{\gamma'} \frac{\exp \left( - (4N + \tfrac12 (\mu+\nu) N^{1/3}) \frac{1+z}{1-z} + b^*z \right) \cdot I_\alpha(w(z))}{(1-z)^2 \cdot z^{(n+m)/2}} \, \frac{dz}{z}
\overset{N \to \infty}{-\!\!\!-\!\!\!-\!\!\!\to}
A(a) \,,
\end{align}
where $\gamma' \equiv \gamma'(a)$ denotes the restriction of the path $\gamma$
to the set $(-a/N^{1/3},+a/N^{1/3})$, and
$$
A(a) 
:=
\frac{\exp(b^*)}{8\pi^{3/2}} \int_{-a}^{+a} \frac{\exp \left( \tfrac{1}{48} (1-iu)^3 - \tfrac{1}{8} (\mu+\nu) (1-iu) - \tfrac{1}{16} (\mu-\nu)^2 (1-iu)^{-1} \right)}{(1-iu)^{3/2}} \, du \,.
$$

On the other hand, we will show that for any $\delta > 0$,
there exists some $a_0(\delta) > 0$  such that for $a \geq a_0(\delta)$,
we have
\begin{align}
\label{claim22}
\left|
\frac{N^{1/3}}{2\pi i} \int_{\gamma''} \frac{\exp \left( - (4N + \tfrac12 (\mu+\nu) N^{1/3}) \frac{1+z}{1-z} + b^*z \right) \cdot I_\alpha(w(z))}{(1-z)^2 \cdot z^{(n+m)/2}} \, \frac{dz}{z}
\right|
\leq 
\delta
\end{align}
for all $N \in \mynat$ large enough.
Here $\gamma'' \equiv \gamma''(a)$ denotes the restriction of the path $\gamma$
to the set $(-\pi,+\pi) \setminus (-a/N^{1/3},+a/N^{1/3})$.

Using the integral representation for the Airy kernel given in \cite{Ko2},
the theorem may then be deduced % from \eqref{claim21} and \eqref{claim22}
using similar arguments as in the previous section.
Indeed, first of all, \eqref{claim21} and \eqref{claim22} imply that
\begin{align*}
\frac{N^{1/3}}{2\pi i} \int_{\gamma} \frac{\exp \left( - (4N + \tfrac12 (\mu+\nu) N^{1/3}) \frac{1+z}{1-z} + b^*z \right) \cdot I_\alpha(w(z))}{(1-z)^2 \cdot z^{(n+m)/2}} \, \frac{dz}{z}
\overset{N \to \infty}{-\!\!\!-\!\!\!-\!\!\!\to}
A(\infty) \,,
\end{align*}
where
$$
A(\infty) =\frac{\exp(b^*)}{8\pi^{3/2}i} \int_{1-i\infty}^{1+i\infty} \frac{\exp \left( \tfrac{1}{48} z^3 - \tfrac{1}{8}(\mu+\nu)z - \tfrac{1}{16}(\mu-\nu)^2 z^{-1} \right)}{z^{3/2}} \, dz \,.
$$
Substituting $z = 2^{2/3} z$, $dz = 2^{2/3} dz$, 
and shifting the path back to the line $1 + i\mathbb{R}$
(which is easily justified by Cauchy's theorem), we obtain
$$
\frac{2^{2/3} \exp(b^*)}{16\pi^{3/2}i} \int_{1-i\infty}^{1+i\infty} \frac{\exp \left( \tfrac{1}{12}z^3 - \tfrac{1}{8}(\mu+\nu) \, 2^{2/3} z - \tfrac{1}{16}(\mu-\nu)^2 \, (2^{2/3} z)^{-1} \right)}{z^{3/2}} \, dz \,.
$$
Making the replacements $\mu \mapsto 2^{4/3} \mu$, $\nu \mapsto 2^{4/3} \nu$
and multiplying by $2^{4/3}$, \linebreak we further obtain
$$
\frac{\exp(b^*)}{4\pi^{3/2}i} \int_{1-i\infty}^{1+i\infty} \frac{\exp \left( \tfrac{1}{12}z^3 - \tfrac{1}{2}(\mu+\nu) - \tfrac{1}{4}(\mu-\nu)^2 z^{-1} \right)}{z^{3/2}} \, dz \,.
$$
By Proposition 2.2 in \cite{Ko2}, the latter expression
is equal to $\exp(b^*) \, \mathbb{A}(\mu,\nu)$,
whence \eqref{complex-AF}.

\medskip

To prove \eqref{claim21} and \eqref{claim22}, we proceed similarly
as in the last section. First of all, we rewrite the integrand as
$$
\frac{f_1(z) \, f_2(z)}{(1-z)^2 \, z^{(n+m)/2}} \,,
$$
where now
$$
f_1(z) := \exp \left( - \big( 4N + \tfrac12(\mu+\nu)N^{1/3} \big) \frac{1+z}{1-z} + b^*z \right) \cdot \exp \left( + w(z) \right)
$$
and
$$
f_2(z) := \exp \left( - w(z) \right) \cdot I_\alpha \left( w(z) \right) \,.
$$
Similarly as in the previous section, we find have
\begin{multline*}
f_1(z)
= 
\exp \bigg( - 4N \frac{1-\sqrt{z}}{1+\sqrt{z}} - \tfrac12(\mu+\nu) N^{1/3} \frac{1-\sqrt{z}}{1+\sqrt{z}} - \tfrac{1}{16} (\mu-\nu)^2 N^{-1/3} \frac{\sqrt{z}}{1-z} \\ + b^* z + \myo ( N^{-1} ) \, \frac{\sqrt{z}}{1-z} \bigg) \,.
\end{multline*}

\medskip

To prove (\ref{claim21}), we use Taylor expansion.
Straightforward calculations show that, for $|u| \leq a$,
\begin{align*}
f_1(\gamma(u/N^{1/3})) 
&= 
\exp \Big( - (1-iu) N^{2/3} - \tfrac{1}{2} N^{1/3} - \tfrac{1}{3} + \tfrac{1}{48} (1-iu)^3 \\&\qquad
 - \tfrac{1}{8} (\mu+\nu) (1-iu) - \tfrac{1}{16} (\mu-\nu)^2 / (1-iu) + b^* \Big) \left( 1 + \myo_a(N^{-1/3}) \right) \,,
\\
f_2(\gamma(u/N^{1/3}))
&= 
\frac{1}{\sqrt{16\pi N^{4/3}/(1-iu)}} \left( 1 + \myo_a(N^{-1/3}) \right) \,,
\\
(1-\gamma(u/N^{1/3}))^2
&=
(1-iu)^2 / N^{2/3} \left( 1 + \myo_a(N^{-1/3}) \right) \,,
\\
(\gamma(u/N^{1/3}))^{(m+n)/2}
&=
\exp \left( (1-iu)N^{2/3} + \tfrac12 N^{1/3} + \tfrac13 \right) \left( 1 + \myo_a(N^{-1/3}) \right) \,.
\end{align*}
% Moreover,
% \begin{align*}
% &\eskip \exp \left( -\tfrac12(m+n)\log(R \exp(iu/N^{1/3})) \right) \\
% &= \exp \left( -\tfrac12(1+\gamma_\infty)N \left( \log(1-1/N^{1/3}) + iu/N^{1/3} \right) \right) \\
% &= \exp \left( -\tfrac12(1+\gamma_\infty) \left( (-1+iu)N^{2/3} - \tfrac12 N^{1/3} - \tfrac13 + \myo(N^{-1/3}) \right) \right) \,.
% \end{align*}
For the second approximation, we have used the observation that for $|u| \leq a$,
$
w(Re^{iu/N}) = 8N^{4/3} / (1-iu) \left( 1 + \myo_a(N^{-1/3}) \right),
$
as well as the asymptotic \linebreak approximation \eqref{bessel-a1} 
for the modified Bessel function.
Putting it all together, the highest-order terms cancel out, 
and we find that the left-hand side of \eqref{claim21} is asymptotically given by
$$
\frac{\exp(b^*)}{8\pi^{3/2} i} \int_{a}^{a} \frac{\exp \left( \tfrac{1}{48} (1-iu)^3 - \tfrac{1}{8} (\mu+\nu) (1-iu) - \tfrac{1}{16} (\mu-\nu)^2 / (1-iu) \right)}{(1-iu)^{3/2}} \, du \,,
$$
which establishes \eqref{claim21}.

\medskip

It remains to show \eqref{claim22}. 
To this end, we will first show that
\begin{align}
\label{step22a}
\frac{f_1(Re^{it})}{(Re^{it})^{(m+n)/2}} = \myo(1) \,.
\end{align}
Write $\sqrt{Re^{it}} = re^{i\varphi}$,
where $r = \sqrt{1-1/N^{1/3}}$, $\varphi \in [-\pi/2,+\pi/2]$.
Then, since
$$
  \left| \re \left( \frac{1-re^{i\varphi}}{1+re^{i\varphi}} \right) \right|
= \left| \frac{1-r^2}{1+r^2+2r\cos\varphi} \right|  
\leq N^{-1/3} \,,
$$
$$
\left| \frac{re^{i\varphi}}{1-(re^{i\varphi})^2} \right| \leq \frac{1}{1-r^2} = N^{1/3}  \,,
$$
the proof of \eqref{step22a} is reduced to showing that
$$
\left| \frac{\exp \left( - 4N \frac{1-re^{i\varphi}}{1+re^{i\varphi}} \right)}{(Re^{it})^{(m+n)/2}} \right| = \myo(1) \,.
$$
Now, for sufficiently large $N \in \mathbb{N}$,
$$
  \re \left( \frac{1-re^{i\varphi}}{1+re^{i\varphi}} \right)
= \frac{1 - r^2}{1 + r^2 + 2r \cos\varphi} 
= \frac{N^{-1/3}}{2r(1+\cos\varphi) + (1-r)^2}\,, 
$$
$$
  \big| Re^{it} \big|^{(m+n)/2} 
= \exp \big( (N-\tfrac{\alpha}{2}) \log(1-N^{-1/3}) \big)
\geq 
  \exp( - N^{2/3} - \tfrac12 N^{1/3} - C_\alpha ) \,,
$$
where $C_\alpha$ denotes a constant depending only on $\alpha$
which need not be the same at~each occurrence. 
It follows that
\begin{align*}
      \left| \frac{\exp \left( -4N \frac{1-re^{i\varphi}}{1+re^{i\varphi}} \right)}{(Re^{it})^{(m+n)/2}} \right|
&\leq \exp \left( \frac{-4N^{2/3}}{2r(1+\cos\varphi) + (1-r)^2} + N^{2/3} + \tfrac12 N^{1/3} + C_\alpha \right) \\
&\leq \exp \left( \frac{-4N^{2/3} + 2r(1+\cos\varphi)N^{2/3} + r(1+\cos\varphi) N^{1/3} + C_\alpha}{2r(1+\cos\varphi) + (1-r)^2} \right) \,.
\end{align*}
(Here some uniformly bounded terms have been absorbed into the constant $C_\alpha$.)
Since $r = \sqrt{1-N^{-1/3}} \leq 1 - \tfrac12N^{-1/3}$, the numerator is bounded above by
$$
-4N^{2/3} + 2(1+\cos\varphi)N^{2/3} - (1+\cos\varphi) N^{1/3} + r(1+\cos\varphi) N^{1/3} + C_\alpha \leq C_\alpha \,,
$$
and \eqref{step22a} is proved.

Furthermore, the same arguments as those leading to \eqref{step12c} yield
\begin{align}
\label{step22b}
|f_2(Re^{it})| = \myo \left( |1-Re^{it}|^{1/2} /\, \sqrt{N} \right) \,.
\end{align}
% as follows from the estimates
% $$
% \big| w(Re^{it}) \big| = 4N \left| \frac{re^{i\varphi}}{1-(re^{i\varphi})^2} \right| \ge 2N \, \frac{1}{|1-Re^{it}|} \ge N \,,
% $$
% $$
%   \re \left( w(\gamma(Re^{it})) \right) 
% = 2 \sqrt{4N^2 + 2 (\mu + \nu) N^{4/3} + \mu \nu N^{2/3}} \cdot \re \left( \frac{(r-r^3)\cos \varphi}{|1-Re^{it}|^2} \right)
% \geq 0 \,,
% $$
% and \eqref{bessel-a2}.

Hence, by \eqref{step22a}, \eqref{step22b}, and symmetry, 
in order to complete the proof of \eqref{claim22},
it~remains to show that for $a \geq a_0(\delta)$,
\begin{align}
\label{step22c}
\frac{1}{2 \pi} \int_{a/N^{1/3}}^{\pi} \frac{1}{|1-Re^{it}|^{3/2}} \ dt \leq \delta N^{1/6} \,.
\end{align}
But this can be proved in the same way as \eqref{stdest}.
\end{proof}

With essentially the same proof, we obtain the following result:

\begin{theorem}
\label{real-A}
Let $f$ denote the second-order correlation function of a real sample covariance matrix
satisfying our standing moment conditions.
For any $\alpha \in \mathbb{N}$, $\mu,\nu \in \mathbb{R}$, setting
$$
Z_N(\mu,\nu) := \left( 16N^2 + 4 (\mu + \nu) N^{4/3} + \mu \nu N^{2/3} \right)^{\alpha/2} \, \exp \left( - 4N - \tfrac12(\mu+\nu) N^{1/3} \right) \,,
$$
we have
\begin{multline}
\label{real-AF}
\lim_{N \to \infty} \left( 4 \, Z_N(2^{4/3} \mu,2^{4/3} \nu) \cdot \frac{f(n,m;4N+2^{4/3} \mu N^{1/3},4N+2^{4/3} \nu N^{1/3})}{n! \, m!} \right) \\
= \exp(b^*) \, \widetilde{\mathbb{A}}(\mu,\nu) \,, % \frac{\ai'(\mu) \ai(\nu) - \ai(\mu) \ai'(\nu)}{\mu-\nu} \,,
\end{multline}
where $b^* := (b-3)$ and $\widetilde{\mathbb{A}}$ is defined as in \eqref{airy2}.
\end{theorem}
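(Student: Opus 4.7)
The plan is to mirror the proof of Theorem \ref{complex-A}, following the author's remark that the derivation in the real setting is essentially the same as in the complex setting. First, I would derive the integral representation analogous to \eqref{intrep2} from Proposition \ref{real-SCM} via Cauchy's formula. Two structural differences appear: the integrand carries an extra factor of $(1-z)^{-1}$ (since $(1-z)^{-2k-\alpha-3}$ replaces $(1-z)^{-2k-\alpha-2}$), and $b^*$ takes the real-case value $b-3$. After the shifts $\mu \mapsto 4N + 2^{4/3}\mu N^{1/3}$, $\nu \mapsto 4N + 2^{4/3}\nu N^{1/3}$ and with $R := 1 - 1/N^{1/3}$, the integrand matches the complex-case integrand from Theorem \ref{complex-A}, multiplied by $(1-z)^{-1}$.

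Next I would split the contour $\gamma$ into $\gamma'$ (where $|t| \leq a/N^{1/3}$) and $\gamma''$ (the rest), and prove claims analogous to \eqref{claim21} and \eqref{claim22}. On $\gamma'$, the Taylor expansions from the complex proof carry over, while the new factor contributes $(1-Re^{iu/N^{1/3}})^{-1} = (N^{1/3}/(1-iu))\,(1 + \myo_a(N^{-1/3}))$. This produces an additional $N^{1/3}$ in the asymptotics, which accounts for the normalization $4$ in \eqref{real-AF} instead of the $2^{4/3}N^{1/3}$ from \eqref{complex-AF}, the residual $2^{2/3}$ being absorbed, as in the complex case, by the substitution $z \mapsto 2^{2/3}z$ used to identify the kernel. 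The limiting integral acquires an extra $(1-iu)^{-1}$, yielding after the same rescaling and shift back to $1+i\mathbb{R}$ a contour integral of the form
\begin{equation*}
\frac{\exp(b^*)}{C\pi^{3/2}i} \int_{1-i\infty}^{1+i\infty} \frac{\exp\!\left(\tfrac{1}{12}z^3 - \tfrac{1}{2}(\mu+\nu) - \tfrac{1}{4}(\mu-\nu)^2 z^{-1}\right)}{z^{5/2}}\, dz \,,
\end{equation*}
for an appropriate constant $C$, differing from the complex-case expression only by the extra factor $z^{-1}$.

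On $\gamma''$, the bounds \eqref{step22a} and \eqref{step22b} carry over unchanged, and the only new ingredient is the estimate $\int_{a/N^{1/3}}^{\pi}|1-Re^{it}|^{-5/2}\,dt = \myo(a^{-3/2}N^{1/2})$, obtained by the same argument as in \eqref{stdest}; together with the modified prefactor $4$ instead of $2^{4/3}N^{1/3}$, this shows that the $\gamma''$-contribution is arbitrarily small for large $a$, uniformly in $N$. The main obstacle will be the final step of identifying the above contour integral with $\widetilde{\mathbb{A}}(\mu,\nu)$. Since $\widetilde{\mathbb{A}} = D\mathbb{A}$ with $D := (x-y)^{-1}(\partial_y - \partial_x)$, and since differentiating $\exp(-\tfrac{1}{4}(\mu-\nu)^2 z^{-1})$ in $\mu$ and $\nu$ and then dividing by $(\mu-\nu)$ produces precisely the extra factor $z^{-1}$ in the integrand, this identification should follow by applying $D$ term by term to Proposition 2.2 of \cite{Ko2} (together with the differential equation for the Airy function, as indicated by the author in Section 3), or by invoking an analogous statement in that reference.
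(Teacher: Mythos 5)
Your proposal is correct and fills in exactly the details that the paper itself leaves unsaid (the paper's ``proof'' of Theorem \ref{real-A} is just the remark that the real case is ``essentially the same'' as Theorem \ref{complex-A}, relying on the observation in Section~3 that the ``differentiated'' kernels arise by applying $D := (x-y)^{-1}(\partial_y - \partial_x)$ to the complex ones). You correctly identify the two structural changes---the extra $(1-z)^{-1}$ from $(1-z)^{-2k-\alpha-3}$ in Proposition \ref{real-SCM} and the new $b^* = b-3$---track the extra $N^{1/3}$ contributed by $(1-z)^{-1}\approx N^{1/3}/(1-iu)$ on $\gamma'$, supply the matching estimate $\int_{a/N^{1/3}}^{\pi}|1-Re^{it}|^{-5/2}\,dt = \myo(a^{-3/2}N^{1/2})$ on $\gamma''$, and close the argument by noting that applying $D$ to the complex-case integral representation of $\mathbb{A}$ produces exactly the extra $z^{-1}$ in the integrand (since $(\partial_\nu-\partial_\mu)$ hits only $-\tfrac14(\mu-\nu)^2 z^{-1}$ and yields $(\mu-\nu)z^{-1}$). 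The constant bookkeeping ($z^{-5/2}$ giving $2^{-1}$ instead of $2^{-1/3}$ under $z\mapsto 2^{2/3}z$, turning $2^{4/3}$ into $4$) also works out. This is essentially the paper's intended approach.
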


\bigskip

\section{Asymptotics at the Hard Edge of the Spectrum}

In order to zoom in at the hard edge of the spectrum,
we have to make the replacements
$$
\mu \mapsto \mu / 4N \quad \text{and} \quad \nu \mapsto \nu / 4N \,,
$$
where $\mu,\nu \in \mathbb{R}_{+}$. We then have the following result:

\begin{theorem}
\label{complex-J}
Let $f$ denote the second-order correlation function of a complex sample covariance matrix
satisfying our standing moment conditions.
For any $\alpha \in \mathbb{N}$, $\mu,\nu \in \mathbb{R}_{+}$, 
setting $Z_N(\mu,\nu) := (\mu\nu/N^2)^{\alpha/2}$, we have
\begin{multline}
\label{complex-JF}
\lim_{N \to \infty} \left( (1/4) \, N^{-1} \, Z_N(\mu/4,\nu/4) \cdot \frac{f(n,m;\mu/4N,\nu/4N)}{n! \, m!} \right) \\
= \exp(b^*) \, \mathbb{J}_\alpha(\mu,\nu) \,, % \frac{J_\alpha(\sqrt{\mu}) \, \sqrt{\nu} \, J_\alpha'(\sqrt{\nu}) - \sqrt{\mu} \, J_\alpha'(\sqrt{\mu}) \, J_\alpha(\sqrt{\nu})}{2(\mu-\nu)} \,.
\end{multline}
where $b^* := 2(b-\tfrac34)$ and $\mathbb{J}_\alpha$ is defined as in \eqref{bessel}.
\end{theorem}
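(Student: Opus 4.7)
The plan is to mimic the proof of Theorem \ref{complex-A}, adapted to the hard-edge regime. After substituting $\mu\mapsto\mu/(4N)$, $\nu\mapsto\nu/(4N)$ in \eqref{intrep2}, the factor $((\mu\nu z)^{1/2})^\alpha=(\mu\nu/(16N^2))^{\alpha/2}z^{\alpha/2}$ in the denominator exactly cancels the prefactor $Z_N(\mu/4,\nu/4)=(\mu\nu/(16N^2))^{\alpha/2}$, leaving
\begin{equation*}
\tfrac{1}{4N}\,Z_N(\mu/4,\nu/4)\cdot\frac{f(n,m;\mu/4N,\nu/4N)}{n!\,m!}
=\frac{1}{8\pi i\,N}\int_\gamma\frac{f_1(z)\,f_2(z)}{(1-z)^2\,z^{N+1-\alpha/2}}\,dz,
\end{equation*}
with $f_1(z):=\exp\bigl(-(\mu+\nu)z/(4N(1-z))+b^*z\bigr)$ and $f_2(z):=I_\alpha\bigl((\mu\nu z)^{1/2}/(2N(1-z))\bigr)$. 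The natural radius here is $R:=1-1/N$ (so that the Bessel argument near $z=R$ is of order $1$), and $\gamma$ is split into $\gamma'(a)=\{|t|\le a/N\}$ and $\gamma''(a)=\{a/N<|t|<\pi\}$ exactly as in the previous two proofs.

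On $\gamma'(a)$, setting $t=u/N$ and $s:=1-iu$ one finds $1-z=(s/N)(1+\myo_a(N^{-1}))$ and $z^{N+1-\alpha/2}=e^{-s}(1+\myo_a(N^{-1}))$. In contrast to the bulk and soft-edge settings, the Bessel argument $(\mu\nu z)^{1/2}/(2N(1-z))$ converges to the \emph{finite} limit $(\mu\nu)^{1/2}/(2s)$, so the asymptotic expansion \eqref{bessel-a1} for $I_\alpha$ is \emph{not} invoked here: continuity of $I_\alpha$ already suffices. A routine Taylor expansion then yields, for the near-part integral, the limit
\begin{equation*}
\frac{e^{b^*}}{8\pi i}\int_{1-ia}^{1+ia}\exp\bigl(-(\mu+\nu)/(4s)\bigr)\,I_\alpha\bigl((\mu\nu)^{1/2}/(2s)\bigr)\,\frac{e^s}{s^2}\,ds.
\end{equation*}
On $\gamma''(a)$, the Bessel argument is bounded by $(\mu\nu)^{1/2}/(2ca)$ for some $c>0$, so $f_2$ is uniformly bounded there; $f_1$ is likewise uniformly bounded since $|(\mu+\nu)z/(4N(1-z))|\le(\mu+\nu)/(4ca)$, and $|z|^{-(N+1-\alpha/2)}=R^{-(N+1-\alpha/2)}=\myo(1)$. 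The only unbounded factor is $(1-z)^{-2}$, which is controlled via $|1-Re^{it}|\gtrsim|t|$ exactly as in \eqref{stdest}; this yields a far-part contribution of size $\myo(1/a)$.

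Sending $a\to\infty$ reduces the theorem to the integral representation
\begin{equation*}
\mathbb{J}_\alpha(\mu,\nu)=\frac{1}{8\pi i}\int_{1-i\infty}^{1+i\infty}\exp\bigl(-(\mu+\nu)/(4s)\bigr)\,I_\alpha\bigl((\mu\nu)^{1/2}/(2s)\bigr)\,\frac{e^s}{s^2}\,ds
\end{equation*}
of the Bessel kernel. This identification, which parallels the appeal to Proposition~2.2 of \cite{Ko2} in the Airy case, is the main (but routine) obstacle of the proof. It should be contained in \cite{Ko2}; failing that, it can be verified directly by expanding both $I_\alpha((\mu\nu)^{1/2}/(2s))$ and $\exp(-(\mu+\nu)/(4s))$ as power series in $1/s$, integrating termwise by the Hankel formula $\frac{1}{2\pi i}\int_{1-i\infty}^{1+i\infty}e^s\,s^{-r}\,ds=1/\Gamma(r)$, and matching the resulting double power series in $\mu,\nu$ with the expansion of $\mathbb{J}_\alpha(\mu,\nu)$ obtained from the series of $J_\alpha(\sqrt\mu)$ and $J_\alpha'(\sqrt\nu)$ in \eqref{bessel}. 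Everything else is a direct transcription of the arguments in the proof of Theorem~\ref{complex-A}, and is in fact technically somewhat simpler because no cancellation among exponentially large terms is required at the hard edge.
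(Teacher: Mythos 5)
Your proposal reproduces the paper's proof of Theorem~\ref{complex-J} essentially step for step: the same radius $R=1-1/N$, the same split of $\gamma$ into $\gamma'(a)$ and $\gamma''(a)$, the same Taylor expansion on the near part, the same $\myo(1/a)$ bound on the far part using $|1-Re^{it}|\gtrsim|t|$, and the same crucial observation that at the hard edge the Bessel argument $w(z)$ stays bounded, so that continuity of $I_\alpha$ suffices and no cancellation of exponentially large factors is required (the paper flags exactly this as ``a major difference to the previous situations''). Your reduction to the contour integral for $\mathbb{J}_\alpha$ is correct; after the substitution $s=z/4$ it coincides with the left-hand side of the paper's Lemma~\ref{lemma}.

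The one piece you leave open is precisely that integral identity. You conjecture it ``should be contained in \cite{Ko2}'', but that reference (Wigner matrices, soft edge) treats only the Airy case and does not contain the Bessel identity; the paper instead states and proves it as a separate Lemma~\ref{lemma}, starting from the tabulated Laplace transform
$\frac{1}{2\pi i}\int_{1-i\infty}^{1+i\infty} e^{tz}\,\exp(-(x^2+y^2)/z)\,I_\alpha(2xy/z)\,z^{-1}\,dz = J_\alpha(2x\sqrt t)\,J_\alpha(2y\sqrt t)$
from \cite{BMP}, differentiating in $x$ and $y$, and forming the antisymmetric combination. Your fallback --- expand $\exp(-(\mu+\nu)/(4s))$ and $I_\alpha((\mu\nu)^{1/2}/(2s))$ in powers of $1/s$, integrate termwise via $\frac{1}{2\pi i}\int_{1-i\infty}^{1+i\infty}e^s s^{-r}\,ds=1/\Gamma(r)$, and match the double series against that of $\mathbb{J}_\alpha$ --- is a valid but considerably more laborious route, and you have only sketched it. So the argument is correct and follows the paper's approach, with the Bessel integral representation (the paper's Lemma~\ref{lemma}) asserted but not actually established, and the suggested external reference for it being wrong.
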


\begin{proof}[Proof of Theorem \ref{complex-J}]
By (\ref{intrep2}), we have
\begin{multline}
\label{intrep31}
N^{-1} \, Z_N(\mu,\nu) \cdot \frac{f(n,m;\mu/N,\nu/N)}{n! \, m!} \\ = 
\frac{1}{2\pi i N} \int_\gamma 
\frac{\exp \left( - (\mu+\nu) N^{-1} \cdot \frac{z}{1-z} + b^*z \right) \cdot I_\alpha(w(z))}{(1-z)^2 \cdot z^{(n+m)/2}} 
\ \frac{dz}{z} \,,
\end{multline}
where now
$$
w(z) := 2 \sqrt{\mu\nu} N^{-1} \cdot \frac{\sqrt{z}}{1-z} \,.
$$
For the radius $R$ we make the same choice as for the bulk of the spectrum,
namely $R = 1 - 1/N$.
Similarly as in the preceding sections, we reduce the proof
to~the~following two claims:

On the one hand, we will show that for any (fixed) $a > 0$,
\begin{align}
\label{claim31}
\frac{1}{2\pi i N} \int_{\gamma'} \frac{\exp \left( - (\mu+\nu) N^{-1} \cdot \frac{z}{1-z} + b^*z \right) \cdot I_\alpha(w(z))}{(1-z)^2 \cdot z^{(n+m)/2}} \, \frac{dz}{z}
\overset{N \to \infty}{-\!\!\!-\!\!\!-\!\!\!\to}
J(a) \,,
\end{align}
where 
$$
J(a) := \frac{1}{2\pi} \int_{-a}^{+a} \frac{\exp \big( (1-iu) - (\mu+\nu)/(1-iu) \big) \, I_\alpha \big( 2\sqrt{\mu\nu}/(1-iu) \big)}{(1-iu)^2} \, du \,.
$$

On the other hand, we will show that for any $\delta > 0$,
there exists some $a_0(\delta) > 0$  such that for $a \geq a_0(\delta)$,
we have
\begin{align}
\label{claim32}
\left|
\frac{1}{2\pi i N} \int_{\gamma''} \frac{\exp \left( - (\mu+\nu) N^{-1} \cdot \frac{z}{1-z} + b^*z \right) \cdot I_\alpha(w(z))}{(1-z)^2 \cdot z^{(n+m)/2}} \, \frac{dz}{z}
\right|
\leq 
\delta
\end{align}
for all $N \in \mynat$ large enough.

Once these two claims are established, the proof of Theorem \ref{complex-J}
is completed using similar arguments as in the preceding section.
\eqref{claim31} and \eqref{claim32} imply that
\begin{align*}
\frac{1}{2\pi i N} \int_{\gamma} \frac{\exp \left( - (\mu+\nu) N^{-1} \cdot \frac{z}{1-z} + b^*z \right) \cdot I_\alpha(w(z))}{(1-z)^2 \cdot z^{(n+m)/2}} \, \frac{dz}{z}
\overset{N \to \infty}{-\!\!\!-\!\!\!-\!\!\!\to}
J(\infty) \,,
\end{align*}
where
$$
J(\infty) = \frac{\exp(b^*)}{2\pi i} \int_{1-i\infty}^{1+i\infty} \frac{\exp \big( z - (\mu+\nu)/z \big) \, I_\alpha \big( 2\sqrt{\mu\nu}/z \big)}{z^2} \, dz \,.
$$
Substituting $z = z/4$, $dz = dz/4$, 
and shifting the path back to the line $1 + i\mathbb{R}$
(which is easily justified by Cauchy's theorem), we obtain
$$
\frac{4\exp(b^*)}{2\pi i} \int_{1-i\infty}^{1+i\infty} \frac{\exp \big( \tfrac{1}{4} z - 4(\mu+\nu)/z \big) \, I_\alpha \big( 8\sqrt{\mu\nu}/z \big)}{z^2} \, dz \,.
$$
Making the replacements $\mu \mapsto \mu/4$, $\nu \mapsto \nu/4$
and dividing by $4$, we further obtain
$$
\frac{\exp(b^*)}{2\pi i} \int_{1-i\infty}^{1+i\infty} \frac{\exp \big( \tfrac{1}{4} z - (\mu+\nu)/z \big) \, I_\alpha \big( 2\sqrt{\mu\nu}/z \big)}{z^2} \, dz \,.
$$
The subsequent Lemma \ref{lemma} states that this is equal to $\exp(b^*) \, \mathbb{J}_{\alpha}(\mu,\nu)$,
thereby proving \eqref{complex-JF}.

\medskip

To prove (\ref{claim31}), we proceed by Taylor expansion once more.
A major difference to the previous situations is given by the fact that,
for $|u| \leq a$,
$$
  w(Re^{iu/N}) 
= 2 \sqrt{\mu\nu} N^{-1} \cdot \frac{\sqrt{Re^{iu/N}}}{1-Re^{iu/N}}
= 2 \sqrt{\mu\nu} \cdot \frac{1}{1-iu} \left( 1 + \myo_a(1/N) \right)
$$
remains bounded. Since, for $|u| \leq a$,
\begin{align*}
\frac{Re^{iu/N}}{1-Re^{iu/N}} &= \frac{N}{1-iu} \left( 1 + \myo_a(1/N) \right) \,,
\\
(1-Re^{iu/N})^2 &= (1-iu)^2 / N^2 \left( 1 + \myo_a(1/N) \right) \,,
\\
(Re^{iu/N})^{(n+m)/2} &= \exp(-(1-iu)) \left( 1 + \myo_a(1/N) \right) \,,
\end{align*}
we obtain
\begin{multline*}
\frac{1}{2\pi i N} \int_{\gamma'} \frac{\exp \left( - (\mu+\nu) N^{-1} \cdot \frac{z}{1-z} + b^*z \right) \cdot I_\alpha(w(z))}{(1-z)^2 \cdot z^{(n+m)/2}} \, \frac{dz}{z} \\
=
\frac{\exp(b^*)}{2\pi N^2} \int_{-a}^{+a} \frac{\exp \big( -(\mu+\nu)/(1-iu) \big) \, I_\alpha \big( 2\sqrt{\mu\nu}/(1-iu) \big)}{(1-iu)^2/N^2 \cdot \exp(-(1-iu))} \, \left( 1+o_a(1) \right) \, du \,,
\end{multline*}
whence \eqref{claim31}.

\medskip

\eqref{claim32} follows immediately from the estimates
$$
\left| \frac{z}{1-z} \right| \leq N \,,
\quad
\left| \frac{\sqrt{z}}{1-z} \right| \leq N \,,
\quad
\left| I_\alpha(z) \right| \leq \exp (\re z) \,,
\quad
\left| z^{(n+m)/2} \right| = \Omega(1) \,,
$$
and, for $a \geq a_0(\delta)$,
$$
\left| \int_{a/N}^{\pi} \frac{1}{|1-Re^{it}|^2} \, dt \right| \leq \delta N \,.
$$
The last estimate is proved similarly to \eqref{stdest}.
\end{proof}

\begin{lemma}
\label{lemma}
For any $\alpha \in \mathbb{N}$, $\mu,\nu \in \mathbb{R}_{+}$,
\begin{multline*}
\frac{1}{2\pi i} \int_{1-i\infty}^{1+i\infty} \frac{\exp \left(\tfrac{1}{4} z - (\mu+\nu)/z \right) \, I_\alpha \left( 2\sqrt{\mu\nu}/z \right)}{z^2} \, dz \\ = \frac{J_\alpha(\sqrt\mu) \, \sqrt\nu \, J_\alpha'(\sqrt\nu) - \sqrt\mu \, J_\alpha'(\sqrt\mu) \, J_\alpha(\sqrt\nu)}{2(\mu-\nu)} \,.
\end{multline*}
\end{lemma}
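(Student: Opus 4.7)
The plan is to expand both sides as explicit double power series in $\mu$ and $\nu$ and then match the coefficients via a Vandermonde--Chu identity. I would proceed in three steps.

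First, on the left-hand side I would substitute the power series
\[
I_\alpha(2\sqrt{\mu\nu}/z) = \sum_{k\ge 0}\frac{(\mu\nu)^{k+\alpha/2}}{k!\,(k+\alpha)!\,z^{2k+\alpha}},\qquad
e^{-(\mu+\nu)/z} = \sum_{j\ge 0}\frac{(-(\mu+\nu))^{j}}{j!\,z^{j}}
\]
and integrate termwise along the Bromwich contour, using the standard Laplace inversion
\[
\frac{1}{2\pi i}\int_{1-i\infty}^{1+i\infty}\frac{e^{z/4}}{z^{m}}\,dz = \frac{(1/4)^{m-1}}{(m-1)!}\qquad(m\ge 1).
\]
Expanding $(-(\mu+\nu))^{j}$ by the binomial theorem and reindexing via $p=l+k$, $q=j-l+k$, this produces
\[
\text{LHS} = \frac{(\mu\nu)^{\alpha/2}}{4^{\alpha+1}}\sum_{p,q\ge 0}(-\mu/4)^{p}(-\nu/4)^{q}\sum_{k=0}^{\min(p,q)}\frac{1}{(p-k)!(q-k)!\,k!\,(k+\alpha)!\,(p+q+\alpha+1)!}.
\]

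Second, for the right-hand side I would first establish the integral representation $\mathbb{J}_\alpha(\mu,\nu)=\tfrac{1}{4}\int_{0}^{1} J_\alpha(\sqrt{\mu x})\,J_\alpha(\sqrt{\nu x})\,dx$. Setting $g_\mu(x):=J_\alpha(\sqrt{\mu x})$, one checks directly from Bessel's ODE that $xg_\mu''+g_\mu'+\bigl(\mu/4-\alpha^{2}/(4x)\bigr)g_\mu=0$, and combining the equations for $\mu$ and $\nu$ gives $\bigl(x(g_\mu g_\nu'-g_\mu'g_\nu)\bigr)'=\tfrac{\mu-\nu}{4}\,g_\mu\,g_\nu$. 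Integrating from $0$ to $1$ (the boundary term at $x=0$ vanishes for every integer $\alpha\ge 0$) and dividing by $2(\mu-\nu)$ yields the representation. Expanding the two Bessel functions in Taylor series and integrating termwise in $x$ then produces
\[
\text{RHS} = \frac{(\mu\nu)^{\alpha/2}}{4^{\alpha+1}}\sum_{p,q\ge 0}\frac{(-\mu/4)^{p}(-\nu/4)^{q}}{p!\,(p+\alpha)!\,q!\,(q+\alpha)!\,(p+q+\alpha+1)}.
\]

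Third, matching the coefficients of $\mu^{p}\nu^{q}$ in the two expansions reduces the lemma, after clearing factorials, to the identity
\[
\sum_{k=0}^{\min(p,q)}\binom{N}{p-k,\,q-k,\,k,\,k+\alpha} = \binom{N}{p}\binom{N}{q},\qquad N:=p+q+\alpha,
\]
a Vandermonde--Chu identity that can be proved by counting the pairs of subsets $A,B\subseteq\{1,\dots,N\}$ with $|A|=p$ and $|B|=q$ according to $k=|A\cap B|$: the four pieces $A\cap B$, $A\setminus B$, $B\setminus A$, $(A\cup B)^{c}$ have sizes $k$, $p-k$, $q-k$, and $k+\alpha$ respectively. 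The main technical obstacles will be justifying the termwise integration in Step~1 (handled by Fubini, using the rapid decay of the series in $1/z$ on the Bromwich line) and the boundary analysis at $x=0$ in Step~2, which needs a separate direct expansion when $\alpha=0$ since $g_\mu(0)\ne 0$ there.
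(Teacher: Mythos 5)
Your proof is correct, but it takes a genuinely different route from the paper's. The paper quotes a known Bromwich-integral formula for the product $J_\alpha(2x\sqrt{t})J_\alpha(2y\sqrt{t})$ from Erd\'elyi et al., differentiates it once in $x$ and once in $y$ (which produces the extra factor of $1/z$), and then takes the antisymmetric linear combination $x\,\partial_x - y\,\partial_y$ followed by division by $2(x^2-y^2)$; specializing $x=\sqrt\mu$, $y=\sqrt\nu$, $t=1/4$ gives the lemma in three lines. Your approach is self-contained: you expand both sides as double power series in $\mu,\nu$, derive the integral representation $\mathbb{J}_\alpha(\mu,\nu)=\tfrac14\int_0^1 J_\alpha(\sqrt{\mu x})J_\alpha(\sqrt{\nu x})\,dx$ from the Wronskian identity for Bessel's ODE, and reduce coefficient matching to a Vandermonde--Chu convolution. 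What the paper's route buys is brevity and no combinatorics; what yours buys is independence from the Erd\'elyi table and an independent confirmation of that entry. One small remark on your Step 2: the boundary term $x\bigl(g_\mu g_\nu' - g_\mu' g_\nu\bigr)\big|_{x=0}$ in fact vanishes for \emph{all} integer $\alpha\ge 0$, including $\alpha=0$ --- for $\alpha=0$ one has $g_\mu'(0)=-\mu/4$ finite and $g_\mu(0)=1$, so the Wronskian $g_\mu g_\nu' - g_\mu' g_\nu$ tends to the finite value $(\mu-\nu)/4$ and the explicit factor of $x$ already kills it --- so no separate expansion for $\alpha=0$ is actually required.
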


\begin{proof}
We start from the following integral representation
for the product of two Bessel functions (see \eg p.~281 in \cite{BMP}):
$$
\frac{1}{2\pi i} \int_{1-i\infty}^{1+i\infty} e^{tz} \, \frac{\exp(-(x^2+y^2)/z) I_\alpha(2xy/z)}{z} \, dz = J_\alpha(2x\sqrt{t}) J_\alpha(2y\sqrt{t}) \,.
$$
Differentiating with respect to $x$ and $y$, we obtain
\begin{multline*}
\frac{1}{2\pi i} \int_{1-i\infty}^{1+i\infty} e^{tz} \, \frac{\exp(-(x^2+y^2)/z) \left( -2x I_\alpha(2xy/z) + 2y I_\alpha'(2xy/z) \right)}{z^2} \, dz \\ = 2\sqrt{t} J_\alpha'(2x\sqrt{t}) J_\alpha(2y\sqrt{t})
\end{multline*}
and
\begin{multline*}
\frac{1}{2\pi i} \int_{1-i\infty}^{1+i\infty} e^{tz} \, \frac{\exp(-(x^2+y^2)/z) \left( -2y I_\alpha(2xy/z) + 2x I_\alpha'(2xy/z) \right)}{z^2} \, dz \\ = 2\sqrt{t} J_\alpha(2x\sqrt{t}) J_\alpha'(2y\sqrt{t})
\end{multline*}
respectively.
Multiplying the former equation by $x$ and the latter equation by $y$,
taking the difference and dividing by $2(x^2-y^2)$ yields
\begin{multline*}
\frac{1}{2\pi i} \int_{1-i\infty}^{1+i\infty} e^{tz} \, \frac{\exp(-(x^2+y^2)/z) I_\alpha(2xy/z)}{z^2} \, dz \, \\ = \frac{J_\alpha(2x\sqrt{t}) \, 2y\sqrt{t} \, J_\alpha'(2y\sqrt{t}) - 2x\sqrt{t} \, J_\alpha'(2x\sqrt{t}) \, J_\alpha(2y\sqrt{t})}{2(x^2-y^2)} \,.
\end{multline*}
The assertion now follows by letting $x := \sqrt{\mu}$, $y := \sqrt{\nu}$, 
and $t := 1/4$.
\end{proof}

The following result can be proved completely analogously to Theorem~\ref{complex-J}:

\begin{theorem}
\label{real-J}
Let $f$ denote the second-order correlation function of a real sample covariance matrix
satisfying our standing moment conditions.
For any $\alpha \in \mathbb{N}$, $\mu,\nu \in \mathbb{R}_{+}$, 
setting $Z_N(\mu,\nu) := (\mu\nu/N^2)^{\alpha/2}$, we have
\begin{multline}
\label{real-JF}
\lim_{N \to \infty} \left( (1/16) \, N^{-2} \, Z_N(\mu/4,\nu/4) \cdot \frac{f(n,m;\mu/4N,\nu/4N)}{n! \, m!} \right) \\
= \exp(b^*) \, \widetilde{\mathbb{J}}_\alpha(\mu,\nu) \,, % \frac{J_\alpha(\sqrt{\mu}) \, \sqrt{\nu} \, J_\alpha'(\sqrt{\nu}) - \sqrt{\mu} \, J_\alpha'(\sqrt{\mu}) \, J_\alpha(\sqrt{\nu})}{2(\mu-\nu)} \,.
\end{multline}
where $b^* := (b-3)$ and $\widetilde{\mathbb{J}}_\alpha$ is defined as in \eqref{bessel2}.
\end{theorem}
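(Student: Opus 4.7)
The starting point is the integral representation for the real setting, obtained from Proposition \ref{real-SCM} by the same computation that produces \eqref{intrep2} from \eqref{complex-SCMF}; the only structural difference is that the outer factor $(1-z)^{-2}$ is replaced by $(1-z)^{-3}$, while $b^*$ now equals $b - 3$. Explicitly,
$$
\frac{f(n,m;\mu,\nu)}{n!\,m!} = \frac{1}{2\pi i}\int_\gamma \frac{\exp\!\left(-(\mu+\nu)\frac{z}{1-z} + b^* z\right) I_\alpha(w(z))}{(1-z)^3 \cdot ((\mu\nu z)^{1/2})^{\alpha}}\, \frac{dz}{z^{m+1}}.
$$
Inserting the replacements $\mu \mapsto \mu/(4N)$, $\nu \mapsto \nu/(4N)$, the quantity $w(z) = 2\sqrt{\mu\nu}N^{-1}\sqrt{z}/(1-z)$ is identical to the one used in the proof of Theorem \ref{complex-J}, and the prefactor $(1/16)N^{-2}$ in the statement is precisely what is needed to absorb both the extra $(1-z)^{-1}$ in the integrand (one additional power of $N$) and the extra factor $16$ eventually produced by the substitution $z \mapsto z/4$ in the $1/z^3$ integral.

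I would then replicate verbatim the two-claim reduction used for Theorem \ref{complex-J}. With $R := 1 - 1/N$, split $\gamma$ into the short arc $\gamma'$ with $|t| \le a/N$ and its complement $\gamma''$. Only one new estimate is required to dispose of $\gamma''$, namely
$$
\int_{a/N}^{\pi} |1 - Re^{it}|^{-3}\, dt \le \delta N^{2} \qquad (a \ge a_{0}(\delta)),
$$
which follows from the same $\cos t \le 1 - c^{2} t^{2}$ argument that gave \eqref{stdest}. Combined with the unchanged bounds $|z/(1-z)|\le N$, $|I_\alpha(z)| \le \exp(\re z)$, and $|z^{(n+m)/2}| = \Omega(1)$ from the complex proof, this shows that the $\gamma''$ contribution to the rescaled integral is $\myo(1)$.

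For the local contribution on $\gamma'$, the Taylor expansions used in Theorem \ref{complex-J} carry over without change; the argument $w(Re^{iu/N}) \to 2\sqrt{\mu\nu}/(1-iu)$ remains bounded, and the only modification relative to the complex case is the extra factor $(1-iu)^{-1}$ inherited from $(1-z)^{-3}$. After the usual cancellations one obtains, in the limit $N \to \infty$,
$$
\frac{\exp(b^{*})}{2\pi} \int_{-a}^{a} \frac{\exp\!\big((1-iu) - (\mu+\nu)/(1-iu)\big)\, I_{\alpha}\!\big(2\sqrt{\mu\nu}/(1-iu)\big)}{(1-iu)^{3}}\, du.
$$
Letting $a \to \infty$, shifting the vertical line $1 - iu$ back to $1 + i\myreal$ by Cauchy's theorem (justified because $I_\alpha(2\sqrt{\mu\nu}/z)/z^{3}$ decays adequately as $|\im z| \to \infty$), then performing $z \mapsto z/4$ (producing a global factor $16$) followed by $\mu \mapsto \mu/4$, $\nu \mapsto \nu/4$, and finally dividing by $16$, yields
$$
\frac{\exp(b^{*})}{2\pi i} \int_{1 - i\infty}^{1 + i\infty} \frac{\exp\!\big(\tfrac{1}{4} z - (\mu+\nu)/z\big)\, I_{\alpha}\!\big(2\sqrt{\mu\nu}/z\big)}{z^{3}}\, dz.
$$

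The remaining step, and the main obstacle, is to identify this last contour integral with $\widetilde{\mathbb{J}}_{\alpha}(\mu,\nu)$; this plays for the real hard edge the role that Lemma \ref{lemma} plays for the complex hard edge. I would deduce it by applying the differential operator $D := \frac{1}{\mu-\nu}\bigl(\nu\partial_\nu - \mu\partial_\mu\bigr)$ to Lemma \ref{lemma} itself. In the variables $x := \sqrt\mu$, $y := \sqrt\nu$ one has $\mu\partial_\mu = \tfrac{1}{2} x \partial_{x}$ and $\nu\partial_\nu = \tfrac{1}{2} y \partial_{y}$, and a direct computation shows that $(y\partial_y - x\partial_x)$ acting on the integrand $e^{z/4 - (x^{2}+y^{2})/z} I_{\alpha}(2xy/z)$ produces exactly $2(x^{2}-y^{2})/z$ times itself: the $I_{\alpha}'$ contributions from the two differentiations cancel identically. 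The factor $(x^{2}-y^{2})$ then cancels the prefactor $1/(\mu-\nu)$, and one obtains $D\mathbb{J}_{\alpha}(\mu,\nu) = \frac{1}{2\pi i}\int \frac{e^{z/4-(\mu+\nu)/z} I_{\alpha}(2\sqrt{\mu\nu}/z)}{z^{3}}\, dz$. Carrying $D$ through the explicit formula for $\mathbb{J}_{\alpha}$ produces second derivatives of $J_\alpha$, which I would eliminate via the Bessel ODE $z^{2} J_{\alpha}''(z) + z J_{\alpha}'(z) + (z^{2} - \alpha^{2}) J_{\alpha}(z) = 0$; after this simplification the $(x+y)$ and $(x+y-2\alpha^{2})$ coefficients appearing in the definition \eqref{bessel2} of $\widetilde{\mathbb{J}}_\alpha$ emerge, completing the identification. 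This final algebraic bookkeeping is where sign and factor-of-two slips are most likely to occur, and it is the only genuinely new ingredient beyond the complex argument.
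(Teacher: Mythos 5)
Your proposal is correct and follows essentially the same route as the paper, which itself only remarks that Theorem~\ref{real-J} ``can be proved completely analogously to Theorem~\ref{complex-J}'': you supply exactly the details this entails, namely the real integral representation with the $(1-z)^{-3}$ factor, the adjusted prefactor $(1/16)N^{-2}$, the strengthened tail bound $\int_{a/N}^{\pi}|1-Re^{it}|^{-3}\,dt\le\delta N^{2}$, and the identification of the limiting contour integral with $\widetilde{\mathbb{J}}_\alpha$ via the operator $D=\frac{1}{\mu-\nu}(\nu\partial_\nu-\mu\partial_\mu)$ applied to Lemma~\ref{lemma} — precisely the mechanism the paper describes in Section~3 for producing the ``differentiated'' kernels. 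Your observation that the $I_\alpha'$ terms cancel so that $D$ acting on the integrand simply multiplies it by $1/z$ is the clean and correct way to promote the $z^{-2}$ integral of Lemma~\ref{lemma} to the required $z^{-3}$ integral.
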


\bigskip

\end{document}